\documentclass[DIV=14,letterpaper]{scrartcl}

\usepackage{tikz}
\usetikzlibrary{calc}
\usetikzlibrary{matrix}
\usepackage{amsmath,amssymb,amsfonts,amsthm}
\usepackage{graphicx}
\allowdisplaybreaks
\usepackage{subfigure,multicol,multirow}
\usepackage{makecell}
\usepackage{hyperref} 
\usepackage{diagbox}

\theoremstyle{plain}% default

\newtheorem{pro}{\hspace{6mm}Proposition}[section]
\newtheorem{lem}{\hspace{6mm}Lemma}[section]

\theoremstyle{definition}

\theoremstyle{remark}

\newcommand{\V}[1]{\mathbf{#1}}

\newcommand{\email}[1]{\href{mailto:#1}{#1}}

\title{Convergence analysis of iterative solution with inexact block preconditioning for weak Galerkin finite element approximation of Stokes flow}
\author{
Weizhang Huang\thanks{Department of Mathematics, the University of Kansas, 1460 Jayhawk Blvd, Lawrence, KS 66045, USA (\email{whuang@ku.edu}).}
\and
Zhuoran Wang\thanks{Department of Mathematics, the University of Kansas, 1460 Jayhawk Blvd, Lawrence, KS 66045, USA (\email{wangzr@ku.edu}).}
}
% \author{
% Weizhang Huang\thanks{Department of Mathematics, the University of Kansas, 1460 Jayhawk Blvd, Lawrence, KS 66045, USA (whuang@ku.edu).}
% \and
% Zhuoran Wang\thanks{Department of Mathematics, the University of Kansas, 1460 Jayhawk Blvd, Lawrence, KS 66045, USA (wangzr@ku.edu).}
% }

\date{} 

\begin{document}

\maketitle

\textbf{Abstract.}
This work is concerned with the convergence of the iterative solution for the Stokes flow,
discretized with the weak Galerkin finite element method and preconditioned using inexact block Schur complement
preconditioning. The resulting saddle point linear system is singular and the pressure solution is not unique.
The system is regularized with a commonly used strategy by specifying the pressure value at a specific location.
It is analytically shown that the regularized system is nonsingular but has an eigenvalue approaching zero as
the fluid kinematic viscosity tends to zero.
Inexact block diagonal and triangular Schur complement preconditioners
are considered with the minimal residual method (MINRES) and the generalized minimal residual method (GMRES), respectively.
For both cases, the bounds are obtained for the eigenvalues of the preconditioned systems and
for the residual of MINRES/GMRES. 
These bounds show that the convergence factor of MINRES/GMRES is almost independent of the viscosity parameter
and mesh size while the number of MINRES/GMRES iterations required to
reach convergence depends on the parameters only logarithmically.
The theoretical findings and effectiveness of the preconditioners are verified
with two- and three-dimensional numerical examples.

\vspace{5pt}

\noindent
\textbf{Keywords:}
Stokes flow, MINRES, GMRES, Preconditioning, Weak Galerkin.

\vspace{5pt}

\noindent
\textbf{Mathematics Subject Classification (2020):}
65N30, 65F08, 65F10, 76D07

%%%%%%%%%%%%%%%%%%%%%%%%%%%%%%%%%%%%%%%%%%%%%%%%%%%%%%%%%%%%%%%
%%%%%%%%%%%%%%%%%%%%%%%%%%%%%%%%%%%%%%%%%%%%%%%%%%%%%%%%%%%%%%%
\section{Introduction}
\label{SEC:intro}
We consider the Stokes flow problem
\begin{equation}
\begin{cases}
\displaystyle
    -\mu \Delta \mathbf{u} + \nabla p  =  \mathbf{f},
      \quad \mbox{in } \; \Omega,
    \\
    \displaystyle
    \nabla \cdot \mathbf{u}  =  0,
     \quad \text{ in } \Omega,
    \\
    \displaystyle
    \mathbf{u}  =  \mathbf{g},
    \quad \mbox{on } \; \partial \Omega,
\end{cases}
\label{Eqn_StokesBVP}
\end{equation}
where $ \Omega \subset \mathbb{R}^d$ $(d=2,3) $ is a bounded polygonal/polyhedral domain,
$ \mu>0 $ is the fluid kinematic viscosity,
$ \mathbf{u} $ is the fluid velocity,
$ p $ is the fluid pressure,
$ \mathbf{f} $ is a body force,
$\mathbf{g}$ is a boundary datum of the velocity satisfying the compatibility condition
$ \int_{\partial \Omega}\mathbf{g}\cdot\mathbf{n}=0 $,
and $\mathbf{n}$ is the unit outward normal to
the boundary $\partial \Omega$ of the domain.
For this problem, the pressure is not unique.
A unique pressure can be obtained by requiring its mean to be zero or by specifying its value at a specific location.

% As an application example, we consider the saddle point system resulting from
% the weak Galerkin (WG) finite element approximation of Stokes flow.
Numerical solution of Stokes flow problems
has continuously gained attention from researchers. Particularly,
a variety of finite element methods have been studied for those problems; e.g., see
\cite{Ainsworth_SINUM_2022} (mixed finite element methods),
\cite{Bevilacqua_SISC_2024,Wang2_CMAME_2021} (virtual element methods),
\cite{Lederer_JSC_2024,TuWangZhang_ETNA_2020} (hybrid discontinuous Galerkin methods),
and \cite{MR3261511,TuWang_CMA_2018,WangYe_Adv_2016} (weak Galerkin (WG) finite element methods).
We use here the lowest-order WG method for the discretization of Stokes flow problems. 
It is known (cf. Lemma~\ref{pro:err1} or \cite{WANG202290})
that the lowest-order WG method, without using stabilization terms, satisfies the inf-sup or LBB
% Ladyzhenskaya–Babu$\check{s}$ka–Brezzi
condition (for stability)
and has the optimal-order convergence. Moreover, the error in the velocity is independent of the error in the pressure
(pressure-robustness) and the error in the pressure is independent of the viscosity $\mu$ ($\mu$-semi-robustness).
On the other hand, efficient iterative solution of the saddle point system arising from the WG approximation of Stokes problems
has not been studied so far.

In this work, we are interested in the efficient iterative solution of the saddle point system
resulting from the lowest-order WG discretization of (\ref{Eqn_StokesBVP}).
The system is singular and the pressure solution is not unique.
We employ a commonly used strategy to avoid the singularity and make the pressure solution unique
by specifying the value zero of the pressure at the barycenter of the first mesh element.
This regularization has several advantages such as it maintaining the symmetry and sparseness of
the original system, it not altering the solution, and its implementation being simple and straightforward.
On the other hand, the regularization is local and the regularized system is almost singular, having
an eigenvalue approaching to zero as $\mu \to 0$. This poses challengers in developing efficient
preconditioning and iterative solution methods.

We consider inexact block diagonal and triangular Schur complement preconditioners for the regularized
saddle point system. The block diagonal preconditioner maintains the symmetry of the system and the preconditioned
system can be solved
with the minimal residual method (MINRES). The spectral analysis can also be used to analyze the convergence
of MINRES. On the other hand, the block triangular preconditioners lead to nondiagonalizable preconditioned systems,
for which we need to use the generalized minimal residual method (GMRES) and the spectral analysis is typically
insufficient to determine the convergence of GMRES. To circumvent this difficulty,
two lemmas (Lemmas~\ref{lem:gmres-conv-lower} and \ref{lem:gmres-conv-upper}) are developed
in Appendix~\ref{appendix:gmres convergence} that provide an estimate on the residual of GMRES with block triangular
preconditioners for general saddle point systems through estimating the norm of the off-diagonal
blocks in the preconditioned system and the spectral analysis of the preconditioned Schur complement.
For both situations, bounds for the eigenvalues of the preconditioned systems and for the residual of MINRES/GMRES
are obtained. These bounds show that the convergence factor of MINRES/GMRES is almost independent of $\mu$ and
$h$ (mesh size) and the number of MINRES/GMRES iterations required to reach convergence
depends on these parameters only logarithmically. They also show that GMRES with block triangular preconditioners
requires about half as many iterations as MINRES with block diagonal preconditioners to reach a prescribed
level of the relative residual.

It should be pointed out that the solution of general saddle point systems has been studied extensively
and remains a topic of active research; e.g., see review articles \cite{BenziGolubLiesen-2005,Benzi2008}
and more recent works \cite{Ainsworth_SINUM_2022,Bacuta-2019,Rhebergen_SISC_2022}.
Most systems that have been studied are either singular systems with a single eigenvalue exactly equal to zero
and other eigenvalues away from zero or nonsingular systems with eigenvalues away from zero.
Little work has been done for almost singular systems.
Moreover, limited analysis work has been done with block triangular preconditioners.
Bramble and Pasciak \cite{Bramble_MathComp_1988} considered and gave convergence analysis for
a lower block triangular preconditioner for symmetric saddle point systems (see more detailed discussion on this topic
in Appendix~\ref{appendix:gmres convergence}).

The rest of this paper is organized as follows.
In Section~\ref{SEC:formulation}, the weak formulation for Stokes flow 
and its discretization by the lowest-order WG method are described.
System regularization and the approximations to the Schur complement are studied also in the section. 
Section~\ref{sec::regularization-diagonal} discusses the convergence of MINRES with inexact block diagonal Schur complmenent preconditioning.
The inexact block triangular Schur complement preconditioning and convergence of GMRES
for the regularized system are studied in Section~\ref{sec::regularization-triagle}.
Numerical results in both two and three dimensions are presented in Section~\ref{SEC:numerical}
to verify the theoretical findings and showcase the effectiveness of the preconditioners.
The conclusions are drawn in Section~\ref{SEC:conclusions}.
Finally, Appendix~\ref{appendix:gmres convergence} discusses block Schur complement preconditioners for general saddle point problems. In particular, two lemmas are developed for the convergence of GMRES with block triangular Schur complement preconditioning.

%%%%%%%%%%%%%%%%%%%%%%%%%%

% %%%%%%%%%%%%%%%%%%%%%%%%%%%%%%%%%%%%%%%%%%%%%%%%%%%%%%%%%%%%%%%
\section{Weak Galerkin discretization and system regularization for Stokes flow}
\label{SEC:formulation}

In this section, we describe the lowest-order WG finite element discretization of the Stokes flow problem (\ref{Eqn_StokesBVP}).
The resulting linear system is regularized with a constraint to ensure the uniqueness of the pressure solution.

We start with the weak formulation of (\ref{Eqn_StokesBVP}): 
finding $ \mathbf{u} \in H^1(\Omega)^d$ and $ p \in L^2(\Omega) $
such that $ \mathbf{u}|_{\partial\Omega} = \mathbf{g} $ (in the weak sense) and
\begin{equation}
\begin{cases}
  \mu (\nabla\mathbf{u}, \nabla\mathbf{v}) - (p, \nabla\cdot\mathbf{v})
= (\mathbf{f}, \mathbf{v}),
   \quad \forall \mathbf{v} \in H^1_0(\Omega)^d,
  \\ 
  -(\nabla\cdot\mathbf{u}, q)
  =  0,
 \quad \forall q \in L^2(\Omega) .
\end{cases}
\label{VarForm}
\end{equation}
Let $\mathcal{T}_h = \{K\}$ be a connected quasi-uniform simplicial mesh on $\Omega$.
A mesh is said to be connected if any pair of its elements is connected at least by a chain of elements that
share an interior facet with each other.
Define the discrete weak function spaces as
\begin{align}
     \displaystyle
    \mathbf{V}_h
    & = \{ \mathbf{u}_h = \{ \mathbf{u}_h^\circ, \mathbf{u}_h^\partial \}: \;
      \mathbf{u}_h^\circ|_{K} \in P_0(K)^d, \;
      \mathbf{u}_h^\partial|_e \in P_0(e)^d, \;
      \forall K \in \mathcal{T}_h, \; e \in \partial K \},
    \\ 
    \displaystyle
    W_h &= \{ p_h \in L^2(\Omega): \; p_h|_{K} \in P_0(K), \; \forall K \in \mathcal{T}_h\},
\end{align}
where $P_0(K)$ and $P_0(e)$ denote the sets of constant polynomials defined on element $K$ and facet $e$, respectively.
Note that $\mathbf{u}_h \in \mathbf{V}_h$ is approximated on both interiors and facets of mesh elements
while $p_h \in W_h$ is approximated on element interiors only.

Denote the lowest-order Raviart-Thomas space by $RT_0(K)$, i.e.,
\begin{align*}
    RT_0(K) = (P_0(K))^d + \V{x} \, P_0(K).
\end{align*}
Then, for a scalar function or a component of a vector-valued function, $u_h = (u_h^{\circ},u_h^{\partial})$,
the discrete weak gradient operator $\nabla_w: W_h \rightarrow RT_0(\mathcal{T}_h)$ is defined as
\begin{equation}
\label{weak-grad-1}
  (\nabla_w u_h, \mathbf{w})_K
  = (u^\partial_h, \mathbf{w} \cdot \mathbf{n})_{\partial K}
  - ( u^\circ_h , \nabla \cdot \mathbf{w})_K,
  \quad \forall \mathbf{w} \in RT_0(K),\quad \forall K \in \mathcal{T}_h ,
\end{equation}
where $\mathbf{n}$ is the unit outward normal to $\partial K$ and $(\cdot, \cdot)_K$
and $( \cdot, \cdot )_{\partial K}$ are the $L^2$ inner product on $K$ and $\partial K$, respectively.
For a vector-valued function $\mathbf{u}_h$, $\nabla_w \mathbf{u}_h$ is viewed as a matrix with each row representing
the weak gradient of a component.
By choosing $\V{w}$ in (\ref{weak-grad-1}) properly and using the fact that $\nabla_w u_h \in RT_0(K)$, we can obtain (e.g., see \cite{HuangWang_CiCP_2015})
\begin{align}
   & \nabla_w \varphi_K^{\circ} = - C_{K} (\V{x}-\V{x}_{K}) ,
   \label{grad_int}
   \\
   & \nabla_w \varphi_{K,i}^{\partial} = \frac{C_{K}}{d+1} (\V{x} -\V{x}_{K})
+ \frac{|e_{K,i}|}{|K|} \V{n}_{K,i}, \quad i = 1, ...,d+1,
\label{grad_face}
\end{align}
where $\varphi_K^{\circ}$ and $\varphi_{K,i}^{\partial}$ denote the basis functions of $P_0(K)$
and $P_0(e_{K,i})$, respectively, $e_{K,i}$ denotes the $i$-th facet of $K$,
$\V{n}_{K,i}$ is the unit outward normal to $e_{K,i}$,
\begin{align*}
C_{K} = \frac{d\; |K|}{\| \V{x} - \V{x}_{K} \|_{K}^2} ,
\quad
\V{x}_{K} = \frac{1}{d+1} \sum_{i=1}^{d+1} \V{x}_{K,i} ,
\end{align*}
and $\V{x}_{K,i}$, $i = 1, ..., d+1$ denote the vertices of $K$.

The discrete weak divergence operator $\nabla_w \cdot: \mathbf{V}_h \to \mathcal{P}_0(\mathcal{T}_h)$ needs to be defined
separately as 
\begin{equation}
   (\nabla_w \cdot \mathbf{u}, w )_{K}
  = ( \mathbf{u}^\partial , w \mathbf{n})_{ e }
  - ( \mathbf{u}^\circ , \nabla w)_{K},
  \quad
  \forall w \in P_0(K) .
  \label{wk_div1}
\end{equation}
Note that $\nabla_w \cdot \V{u}|_K \in P_0(K)$. By taking $w = 1$, we have
\begin{equation}
(\nabla_w \cdot \V{u}, 1)_{K} = \sum_{i=1}^{d+1} |e_{K,i}| \langle\V{u}\rangle_{e_{K,i}}^T \V{n}_{K,i} ,
\label{wk_div2}
\end{equation}
where $\langle\V{u}\rangle_{e_{K,i}}$ denotes the average of $\V{u}$ on facet $e_{K,i}$
and $|e_{K,i}|$ is the $(d-1)$-dimensional measure of $e_{K,i}$.

Having defined the discrete weak spaces, gradient, and divergence, we can now define the WG approximation
of (\ref{VarForm}): finding $ \mathbf{u}_h \in \mathbf{V}_h $ and $ p_h \in W_h $
such that $ \mathbf{u}_h^\partial|_{\partial \Omega} = Q_h^{\partial}\mathbf{g} $ and
\begin{equation}
\begin{cases}
    \displaystyle
    \mu \sum_{K\in\mathcal{T}_h} (\nabla_w \mathbf{u}_h,\nabla_w \mathbf{v})_K 
    -\sum_{K \in \mathcal{T}_h}(p_h^{\circ}, \nabla_w\cdot\mathbf{v})_K
    =  \sum_{K \in \mathcal{T}_h} (\mathbf{f}, \mathbf{\Lambda}_h\mathbf{v})_K,
      \quad \forall \mathbf{v} \in \mathbf{V}_h^0,
    \\ 
    \displaystyle
    -\sum_{K \in \mathcal{T}_h}(\nabla_w\cdot\mathbf{u}_h,q^{\circ})_K
    =  0,
   \quad \forall q \in W_h,
\end{cases}
\label{scheme}
\end{equation}
where $Q_h^{\partial}$ is a $L^2$-projection operator onto $\V{V}_h$ restricted on each facet
and the lifting operator $\mathbf{\Lambda}_h: \V{V}_h \to RT_0(\mathcal{T}_h)$ is defined \cite{Mu.2020,WANG202290} as
\begin{equation}
    \displaystyle
    ( (\mathbf{\Lambda}_h\mathbf{v}) \cdot \mathbf{n}, w )_{e}
      = ( \mathbf{v}^{\partial}\cdot\mathbf{n}, w )_{e},
        \quad \forall w \in P_0(e),\; \forall \V{v} \in \V{V}_h, \; \forall e \subset \partial K .
\label{Eqn_DefLambdah}
\end{equation}
Notice that $\mathbf{\Lambda}_h\mathbf{v}$ depends on $\mathbf{v}^{\partial}$
but not on $\mathbf{v}^{\circ}$. 
The following lemma shows the optimal-order convergence of scheme \eqref{scheme}.

\begin{lem}
\label{pro:err1}
Let $ \mathbf{u} \in H^{2}(\Omega)^d $ and $p \in H^1(\Omega)$ be the exact solutions for the Stokes problem (\ref{VarForm}) and let $\mathbf{u}_h\in \mathbf{V}_h$ and $p_h \in W_h$ be the numerical solutions of the scheme \eqref{scheme}.
Assume that $ \mathbf{f} \in L^2(\Omega)^d $. 
Then, there hold
\begin{align}
& \| p - p_h \| \le C h \|\V{f}\|,
\label{err-p}
\\
&   \| \nabla \mathbf{u} - \nabla_w \mathbf{u}_h \|
  \leq C h \|\mathbf{u}\|_2,
\label{err-du}
  \\
  & \| \V{u} - \V{u}_h \| = \| \V{u} - \V{u}_h^{\circ} \| \leq C h \|\mathbf{u}\|_2,
\label{err-u}
\\
&   \| Q_h^\circ\mathbf{u} - \mathbf{u}_h^\circ\|
  \leq C h^{2} \|\mathbf{u}\|_2,
\label{err-u0}
\end{align}
where $\| \cdot \| = \| \cdot \|_{L^2(\Omega)}$, $\|\cdot\|_2 = \|\cdot\|_{H^2(\Omega)}$, 
% $\|\mathbf{f}\|_1 = \|\mathbf{f}\|_{H^1(\Omega)}$,
$C$ is a constant independent of $ h $ and $ \mu $, and $Q_h^\circ$ is a $L^2$-projection operator
for element interiors satisfying $Q_h^\circ\mathbf{u}|_K = \langle \V{u}\rangle_K, \;  \forall K \in \mathcal{T}_h$.
\end{lem}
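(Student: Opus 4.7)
The plan is to establish these estimates inside the standard mixed-finite-element framework for saddle point problems, adapted to the WG setting as in \cite{WANG202290}. The key ingredients I would rely on are (i) an inf-sup (LBB) condition for the pair $(\mathbf{V}_h, W_h)$ with the discrete weak divergence $\nabla_w\cdot$, (ii) commuting-diagram properties of the canonical interpolant $Q_h\mathbf{u} = \{Q_h^\circ\mathbf{u}, Q_h^\partial\mathbf{u}\}$ with $\nabla_w$ and $\nabla_w\cdot$, and (iii) the $H(\mathrm{div})$-conforming lifting $\mathbf{\Lambda}_h$ in (\ref{Eqn_DefLambdah}) that decouples the right-hand side from the pressure.

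First I would verify that $\nabla_w\cdot(Q_h\mathbf{u}) = Q_h^\circ(\nabla\cdot\mathbf{u})$ directly from definition (\ref{wk_div1}) together with the fact that $Q_h^\partial$ preserves facet means. Since $\nabla\cdot\mathbf{u} = 0$, this yields $\nabla_w\cdot(Q_h\mathbf{u}) = 0$, which is crucial for writing the error equation in a clean split form. Next I would subtract the scheme (\ref{scheme}) from a tested version of (\ref{VarForm}), using $\mathbf{\Lambda}_h\mathbf{v}$ as the test in the continuous problem. Because $\mathbf{\Lambda}_h\mathbf{v}\in RT_0(\mathcal{T}_h)$ is $H(\mathrm{div})$-conforming with facet normal trace equal to $\mathbf{v}^\partial\cdot\mathbf{n}$, integration by parts of the pressure term produces only interior divergence contributions against $p_h^\circ - p$, and all edge jumps vanish. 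This is the pressure-robust cancellation that removes any $\mu^{-1}\|p\|_1$ factor from the velocity estimate.

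With the error equation in hand, the energy-norm bound (\ref{err-du}) follows from Brezzi's theory: coercivity of $\mu(\nabla_w\cdot,\nabla_w\cdot)$ on the discrete kernel $\{\mathbf{v}\in\mathbf{V}_h^0:\nabla_w\cdot\mathbf{v}=0\}$ combined with the inf-sup condition, which I would prove via a Fortin operator built on $RT_0$. The pressure estimate (\ref{err-p}) then comes from the inf-sup condition applied to the momentum residual, whose $L^2$-dual norm is bounded by $Ch\|\mathbf{f}\|$ via approximation of $\mathbf{\Lambda}_h\mathbf{v}-\mathbf{v}$ on $RT_0$; this is where $\mu$-semi-robustness appears, since only $\|\mathbf{f}\|$ enters the right-hand side of the pressure residual. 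The $L^2$-velocity bound (\ref{err-u}) and the superconvergence (\ref{err-u0}) I would obtain by the Aubin--Nitsche duality using an $H^2$-regular dual Stokes problem; for (\ref{err-u0}) an extra factor of $h$ is gained by exploiting that $\mathbf{u}_h^\circ - Q_h^\circ\mathbf{u}$ is element-wise constant and orthogonal to $Q_h^\circ$-images of the dual pressure.

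The main obstacle I anticipate is executing the pressure-robust decoupling rigorously. Concretely, one must track the consistency error produced by (a) replacing $(\mathbf{f},\mathbf{v})$ by $(\mathbf{f},\mathbf{\Lambda}_h\mathbf{v})$ on the right and (b) replacing $\nabla\mathbf{u}$ by $\nabla_w\mathbf{u}$ under the bilinear form, and show that neither generates a term scaling like $\mu^{-1}\|p\|_1$ in the pressure estimate. This rests on the identities $\langle q, (\mathbf{\Lambda}_h\mathbf{v}-\mathbf{v}^\partial)\cdot\mathbf{n}\rangle_{\partial K}=0$ for $q\in P_0(K)$, stemming from (\ref{Eqn_DefLambdah}), together with standard Bramble--Hilbert estimates on each element. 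Once these cancellations are confirmed, the remaining computations are routine approximation-theoretic bounds on $RT_0$ and $P_0$ interpolants.
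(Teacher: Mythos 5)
The paper does not prove this lemma at all: its ``proof'' is a two-line citation to \cite[Theorem 4.5]{MuYeZhang_SISC_2021} and \cite[Theorem 3]{WANG202290}, where the estimates are established. Your outline is therefore not compared against an in-paper argument but against those references, and it does reconstruct essentially their strategy: the commuting property $\nabla_w\cdot(Q_h\mathbf{u})=Q_h^\circ(\nabla\cdot\mathbf{u})$ (which you verify correctly from \eqref{wk_div1}), the inf-sup condition for $(\mathbf{V}_h,W_h)$, the $RT_0$ lifting $\mathbf{\Lambda}_h$ to decouple the velocity error from the pressure, and an Aubin--Nitsche duality argument for \eqref{err-u0} (from which \eqref{err-u} follows by adding the projection error $\|\mathbf{u}-Q_h^\circ\mathbf{u}\|\le Ch\|\mathbf{u}\|_2$). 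So the route is the right one and consistent with what the paper invokes.

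That said, as a proof your text is still a plan with the decisive steps asserted rather than carried out, and two of them deserve explicit attention. First, the pressure bound in the form \eqref{err-p}, $\|p-p_h\|\le Ch\|\mathbf{f}\|$ with $C$ independent of $\mu$, does not come only from bounding the momentum residual by $Ch\|\mathbf{f}\|$: the inf-sup argument naturally produces terms of the type $\mu\|\nabla_w(Q_h\mathbf{u}-\mathbf{u}_h)\|+\|p-Q_h^\circ p\|\lesssim h\bigl(\mu\|\mathbf{u}\|_2+\|p\|_1\bigr)$, and converting this into $Ch\|\mathbf{f}\|$ requires the a priori regularity estimate $\mu\|\mathbf{u}\|_2+\|p\|_1\le C\|\mathbf{f}\|$ (plus boundary-data terms) for the continuous Stokes problem; your one-line justification via ``approximation of $\mathbf{\Lambda}_h\mathbf{v}-\mathbf{v}$'' glosses over this. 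Second, the $O(h^2)$ estimate \eqref{err-u0} and the duality argument behind it need full $H^2$/$H^1$ elliptic regularity of the dual Stokes problem, i.e.\ a convexity-type assumption on $\Omega$, which should be stated rather than left implicit; the inf-sup condition itself (via a Fortin operator) and the consistency identities for $\mathbf{\Lambda}_h$ are likewise invoked but not proved. None of these points indicates a wrong approach---they are exactly the ingredients supplied in the cited works---but they are the places where your sketch would have to be filled in to stand as a proof.
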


\begin{proof}
The proof of these results can be found in \cite[Theorem 4.5]{MuYeZhang_SISC_2021} and \cite[Theorem 3]{WANG202290}.
\end{proof}

We would like to cast (\ref{scheme}) in a matrix-vector form. To this end,
for any element $K$ we denote
the WG approximations of $\V{u}$ on the interior and facets of $K$
by $\V{u}_{h,K}^{\circ}$ and $\V{u}_{h,K,i}^{\partial} \; (i = 1,...,d+1)$, respectively.
With this, we can express $\V{u}_h$ as
\begin{align*}
\displaystyle
\V{u}_{h}(\V{x}) & = \V{u}_{h,K}^{\circ} \varphi_K^{\circ}(\V{x}) + \sum_{i = 1}^{d+1} \V{u}_{h,K,i}^{\partial}
\varphi_{K,i}^{\partial}(\V{x})
= \V{u}_{h,K}^{\circ} \varphi_K^{\circ}(\V{x})
\\
& \qquad \qquad + \sum_{\substack{i = 1\\ e_{K,i} \notin \partial \Omega}}^{d+1} \V{u}_{h,K,i}^{\partial}
\varphi_{K,i}^{\partial}(\V{x})
+ \sum_{\substack{i = 1\\ e_{K,i} \in \partial \Omega}}^{d+1} \V{u}_{h,K,i}^{\partial}
\varphi_{K,i}^{\partial}(\V{x}),
\quad \forall \mathbf{x} \in K ,\; \forall K \in \mathcal{T}_h .
\end{align*}
Then, we can write \eqref{scheme} into a matrix-vector form as
\begin{equation}
    \begin{bmatrix}
        \mu A & -(B^{\circ})^T \\
       -B^{\circ} & \mathbf{0}
    \end{bmatrix}
    \begin{bmatrix}
        \mathbf{u}_h \\
        \mathbf{p}_h
    \end{bmatrix}
    =
    \begin{bmatrix}
        \mathbf{b}_1 \\
        \mathbf{b}_2
    \end{bmatrix},
    % \quad
    % \mathcal{A}_h = 
    %  \begin{bmatrix}
    %     \mu A & -(B^{\circ})^T \\
    %    -B^{\circ} & \mathbf{0}
    % \end{bmatrix}
    \label{scheme_matrix}
\end{equation}
where the matrices $A$ and $B^{\circ}$ and vectors $\V{b}_1$ and $\V{b}_2$ are defined as
\begin{align}
    & \mathbf{v}^T A \mathbf{u}_h  =  \sum_{K\in\mathcal{T}_h} (\nabla_w \mathbf{u}_h,\nabla_w \mathbf{v})_K
    = \sum_{K\in\mathcal{T}_h} (\V{u}_{h,K}^{\circ}\nabla_w \varphi_K^{\circ},  \nabla_w \mathbf{v})_K
    \label{A-1}  \\
   & 
   \displaystyle
    \qquad + \sum_{K\in\mathcal{T}_h} \sum^{d+1}_{\substack{i = 1 \\e_{K,i} \notin \partial \Omega}}(\V{u}_{h,K,i}^{\partial}\nabla_w \varphi_{K,i}^{\partial},\nabla_w \mathbf{v})_K,
    \quad \forall \mathbf{u}_h, \mathbf{v} \in \mathbf{V}_h^0,
    \notag \\
    & \mathbf{q}^T B^{\circ} \mathbf{u}_h  =  \sum_{K \in \mathcal{T}_h}  (\nabla_{w}\cdot\mathbf{u}_h,q^{\circ})_K
    \label{B-1} \\
& \qquad = \sum_{K\in\mathcal{T}_h} \sum^{d+1}_{\substack{i = 1 \\e_{K,i} \notin \partial \Omega}}
    |e_{K,i}| q_{K}^{\circ}(\V{u}_{h,K,i}^{\partial})^T \V{n}_{K,i},
    \quad \forall \mathbf{u}_h \in \mathbf{V}_h^0, \quad \forall q \in W_h
    \notag 
    \\
& \mathbf{v}^T \V{b}_1 = \sum_{K \in \mathcal{T}_h} (\mathbf{f}, \mathbf{\Lambda}_h\mathbf{v})_K
    - \mu \sum_{K\in\mathcal{T}_h} \sum_{\substack{i = 1\\ e_{K,i} \in \partial \Omega}}^{d+1}( (Q_{h}^{\partial}\V{g}) \nabla_w \varphi_{K,i}^{\partial},\nabla_w \mathbf{v})_K, \; \forall \mathbf{v} \in \mathbf{V}_h^0,
    \label{b1-1}
    \\
& \mathbf{q}^T \V{b}_2 = \sum_{K\in\mathcal{T}_h} \sum^{d+1}_{\substack{i = 1 \\e_{K,i} \in \partial \Omega}}
    |e_{K,i}| q_{K}^{\circ} (Q_{h}^{\partial}\V{g}|_{e_{K,i}} )^T \V{n}_{K,i},
    \quad \forall q \in W_h .
    \label{b2-1}
\end{align}
In the above equations, we have used $\V{v}$ (or $\V{v}_h$) interchangeably for any WG approximation of $\V{v}_h$ in $\V{V}_h$
and the vector formed by its values $(\V{v}_{h,K}^{\circ}, \V{v}_{h,K,i}^{\partial})$ for $i = 1, ..., d+1$ and $K \in \mathcal{T}_h$,
excluding those on $\partial \Omega$.
Similarly, for any $q_h \in W_h$, $\V{q}$ (or $\V{q}_h$) is used to denote the vector formed by $q_{h,K}$ for all $K \in \mathcal{T}_h$.

Notice that (\ref{scheme_matrix}) is a saddle point system.
We are interested in its efficient iterative solution using block Schur complement preconditioning.
The following lemma shows that (\ref{scheme_matrix}) is singular and the pressure solution is not unique.
%The following lemma shows that (\ref{scheme_matrix}) is singular.
% Before we discuss this, we would like to describe two issues with (\ref{scheme_matrix}): singularity and inconsistency.
%Before we discuss this, we would like to describe the singularity of this system.

\begin{lem}
\label{lem:B0-1}
% Assume that the mesh $\mathcal{T}_h$ is connected in the sense that 
% any pair of elements is connected at least by a chain of elements that share an interior facet each other.
The null space of $(B^{\circ})^T$ is given by
\[
\text{Null}((B^{\circ})^T) = \{ p_h \in W_h: \; p_{h,K} = C, \; \forall K \in \mathcal{T}_h, \; \text{ C is a constant} \} .
\]
\end{lem}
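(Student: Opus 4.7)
The plan is to translate the condition $(B^{\circ})^T\V{p} = \V{0}$ into a variational condition over test functions $\V{u}_h \in \V{V}_h^0$, and then localize it facet by facet using the geometry of the mesh. Concretely, $(B^{\circ})^T\V{p} = \V{0}$ is equivalent to $\V{p}^T B^{\circ}\V{u}_h = 0$ for every $\V{u}_h \in \V{V}_h^0$, which by the explicit formula \eqref{B-1} reads
\begin{equation*}
\sum_{K\in\mathcal{T}_h} \sum_{\substack{i = 1 \\ e_{K,i} \notin \partial \Omega}}^{d+1}
|e_{K,i}|\, p_{h,K}\, (\V{u}_{h,K,i}^{\partial})^T \V{n}_{K,i} = 0, \qquad \forall \V{u}_h \in \V{V}_h^0.
\end{equation*}

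The first key step is a re-indexing of this sum from a sum over (element, local facet) pairs to a sum over interior facets. Every interior facet $e$ is shared by exactly two elements, say $K$ and $K'$, and the corresponding outward unit normals satisfy $\V{n}_{K,i} = -\V{n}_{K',j}$, while the facet trace $\V{u}_{h}^{\partial}|_{e}$ is single-valued. After this reorganization the condition becomes
\begin{equation*}
\sum_{e \text{ interior}} |e|\, (p_{h,K} - p_{h,K'})\, (\V{u}_{h}^{\partial}|_e)^T \V{n}_{K,e} = 0, \qquad \forall \V{u}_h \in \V{V}_h^0,
\end{equation*}
where $\V{n}_{K,e}$ is the unit normal to $e$ pointing from $K$ to $K'$.

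Next I would exploit the freedom in $\V{u}_h$: since the interior facet traces $\V{u}_{h}^{\partial}|_e$ can be chosen independently as arbitrary vectors in $\mathbb{R}^d$ (taking, e.g., $\V{u}_{h}^{\partial}|_e = \V{n}_{K,e}$ and zero on every other degree of freedom), each individual coefficient must vanish, giving $p_{h,K} = p_{h,K'}$ across every interior facet. The connectedness hypothesis on $\mathcal{T}_h$ then upgrades this pairwise equality to the global conclusion $p_{h,K} = C$ for all $K$. For the converse inclusion, if $p_{h,K} \equiv C$, then every facet contribution carries the factor $(p_{h,K} - p_{h,K'}) = 0$ in the re-indexed sum, so $\V{p}^T B^{\circ} \V{u}_h = 0$ automatically; alternatively, \eqref{wk_div2} together with cancellation of opposing normals on interior facets and the vanishing of $\V{u}_h^{\partial}$ on $\partial \Omega$ gives the same conclusion.

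I do not expect a serious obstacle here: the only subtle point is the rigorous facet-by-facet re-indexing and the invocation of mesh connectivity to pass from the local equality $p_{h,K} = p_{h,K'}$ on neighboring elements to a single global constant. Both are standard and rely only on the definitions already laid out.
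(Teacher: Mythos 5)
Your proposal is correct and follows essentially the same route as the paper: testing with a velocity supported on a single interior facet with trace equal to the facet normal, which forces $p_{h,K}=p_{h,\tilde K}$ across that facet, and then invoking mesh connectedness. The explicit facet re-indexing and the converse inclusion you add are harmless elaborations of the same argument.
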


\begin{proof}
For any $p_h \in \text{Null}((B^{\circ})^T)$, from (\ref{B-1}) we have
\[
\V{v}^T (B^{\circ})^T \V{p}_h = \V{p}_h^T B^{\circ} \V{v}
= \sum_{K\in\mathcal{T}_h} \sum^{d+1}_{\substack{i = 1 \\e_{K,i} \notin \partial \Omega}}
    |e_{K,i}| p_{h,K}^{\circ}(\V{v}_{K,i}^{\partial})^T \V{n}_{K,i},
    \quad \forall \mathbf{v} \in \mathbf{V}_h^0 .
\]
Let $e$ be an arbitrary interior facet and the two elements sharing it be $K$ and $\tilde{K}$.
Taking $\V{v}|_e = \V{n}_e$ and $\V{v} = \V{0}$ elsewhere in the above equation, we obtain
\[
    (p_{h,K}^{\circ} - p_{h,\tilde{K}}^{\circ}) |e| = 0,
\]
which implies $p_{h,K}^{\circ} = p_{h,\tilde{K}}^{\circ}$. From the arbitrariness of $e$ and the connection assumption of the mesh, we know
that $p_h$ is constant on all elements.
\end{proof}

The above lemma implies that $(B^{\circ})^T$ is one-rank deficient. As a consequence, the linear system
(\ref{scheme_matrix}) is singular and the pressure solution is not unique.
Here, we follow a strategy commonly used to avoid the singularity and make the pressure solution unique
by specifying the value of the pressure at a specific location.
To this end, we modify the zero (2,2)-block of \eqref{scheme_matrix} into
\begin{equation}
    \begin{bmatrix}
        \mu A & -(B^{\circ})^T \\
       -B^{\circ} & -D
    \end{bmatrix}
    \begin{bmatrix}
        \mathbf{u}_h \\
        \mathbf{p}_h
    \end{bmatrix}
    =
    \begin{bmatrix}
        \mathbf{b}_1 \\
        \mathbf{b}_2
    \end{bmatrix},
    \label{regularized_scheme}
\end{equation}
where $D = \text{diag}(d_{11}, 0, ..., 0)$ and $d_{11}$ is a positive number whose choice will be discussed later.
This modification is equivalent to adding $d_{11} p_{h,K_1}^{\circ} = 0$ to the first equation of the second-block
of scheme (\ref{scheme_matrix}), which can be viewed as specifying the value zero of the pressure at the barycenter
of the first mesh element.
Moreover, this regularization does not change the solution of the system: the solution of \eqref{regularized_scheme}
is a solution of  (\ref{scheme_matrix}). Furthermore, the regularization maintains the symmetry and sparseness
of the coefficient matrix.

The other regularization strategies include
zero-mean condition enforcement and projection methods (e.g., see \cite{GUERMOND20066011,GWYNLLYW20061027})
and global regularization methods (such as the one of \cite{GWYNLLYW20061027} where
a scalar multiple of the mass matrix of pressure is added to the zero block of the system matrix).

In the following, we provide an analytical proof of the nonsingularity of \eqref{regularized_scheme} and establish bounds for the Schur complement $S$.

By rescaling the unknown variables, we rewrite \eqref{regularized_scheme} into
\begin{equation}
    \begin{bmatrix}
        A & -(B^{\circ})^T \\
       -B^{\circ} & -\mu D
    \end{bmatrix}
    \begin{bmatrix}
        \mu \mathbf{u}_h \\
        \mathbf{p}_h
    \end{bmatrix}
    =
    \begin{bmatrix}
        \mathbf{b}_1 \\
        \mu \mathbf{b}_2
    \end{bmatrix},
    \quad \mathcal{A}   = \begin{bmatrix}
        A & -(B^{\circ})^T \\
       -B^{\circ} & -\mu D
    \end{bmatrix} .
    \label{scheme_matrix_2}
\end{equation}

\begin{lem}
\label{lem:S-2}
The Schur complement $ S = \mu D + B^{\circ} A^{-1} (B^{\circ})^T$ for (\ref{scheme_matrix_2}) is symmetric and positive
definite. Moreover, there holds
\begin{align}
    \mu D \le S \le \left (d + \mu \frac{d_{11}}{|K_1|}\right ) M_p^{\circ},
\label{lem:S-2-1}
\end{align}
where $|K_1|$ is the volume of the first element.
\end{lem}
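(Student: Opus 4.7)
My plan is to split the proof of Lemma~\ref{lem:S-2} into three parts: symmetry and positive definiteness of $S$, the lower bound $\mu D \le S$, and the upper bound against $M_p^{\circ}$.

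First I would note that $A$ is symmetric because it arises from the symmetric bilinear form in (\ref{A-1}), so $A^{-1}$ is symmetric and $B^{\circ} A^{-1}(B^{\circ})^T$ is symmetric; together with the diagonal matrix $D$, this makes $S$ symmetric. For positive definiteness, I would suppose $\V{p}^T S \V{p} = 0$. Since the two nonnegative summands $\mu \V{p}^T D \V{p}$ and $\V{p}^T B^{\circ} A^{-1}(B^{\circ})^T\V{p}$ must both vanish, the first gives $p_{h,K_1}^{\circ}=0$, and the second (as $A^{-1}$ is SPD) gives $(B^{\circ})^T\V{p}=\V{0}$, which by Lemma~\ref{lem:B0-1} forces $p_h$ to be constant on all elements. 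Combined with $p_{h,K_1}^{\circ}=0$ this yields $\V{p}=\V{0}$. The lower bound $\mu D \le S$ is then immediate from the positive semidefiniteness of $B^{\circ} A^{-1}(B^{\circ})^T$.

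For the upper bound I would exploit the variational characterization
\[
\V{p}^T B^{\circ} A^{-1}(B^{\circ})^T \V{p} \;=\; \sup_{\V{v}\ne\V{0}} \frac{(\V{v}^T (B^{\circ})^T \V{p})^2}{\V{v}^T A \V{v}} .
\]
Using (\ref{B-1}) together with the elementwise constancy of $p_h^{\circ}$, the numerator rewrites as $(p_h,\nabla_w\cdot\V{v})^2$, which Cauchy--Schwarz bounds by $\|p_h\|^2\|\nabla_w\cdot\V{v}\|^2$. The denominator equals $\|\nabla_w\V{v}\|^2$. Thus the upper bound reduces to the discrete trace-type inequality $\|\nabla_w\cdot\V{v}\|^2 \le d\|\nabla_w\V{v}\|^2$. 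To establish this I would test (\ref{weak-grad-1}) with constant vector fields $\V{w}=\V{e}_j$ and (\ref{wk_div1}) with $w=1$ to obtain the elementwise identity $\int_K \nabla_w\cdot\V{v}=\int_K \mathrm{tr}(\nabla_w\V{v})=\int_{\partial K}\V{v}^{\partial}\cdot\V{n}$, then use that $\nabla_w\cdot\V{v}|_K$ is constant to write $\|\nabla_w\cdot\V{v}\|_{L^2(K)}^2 = |K|^{-1}\bigl(\int_K\nabla_w\cdot\V{v}\bigr)^2$, and finally apply Cauchy--Schwarz on both the $K$-integral and the $d$-term trace sum. Recognizing $\|p_h\|^2=\V{p}^T M_p^{\circ}\V{p}$ yields $\V{p}^T B^{\circ} A^{-1}(B^{\circ})^T\V{p}\le d\,\V{p}^T M_p^{\circ}\V{p}$. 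For the $D$-contribution, I would bound $\mu\V{p}^T D\V{p}=\mu d_{11}(p_{h,K_1}^{\circ})^2\le \mu(d_{11}/|K_1|)\V{p}^T M_p^{\circ}\V{p}$ by simply retaining only the $K_1$-term in $\sum_K |K|(p_{h,K}^{\circ})^2$. Summing the two bounds gives the claim.

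The main obstacle will be the trace inequality, because $\nabla_w\cdot$ and the trace of the row-wise weak gradient are defined through different test spaces ($P_0(K)$ versus $RT_0(K)$) and do not coincide pointwise. The key observation that unlocks the argument is that $\nabla_w\cdot\V{v}$ is elementwise constant, so its $L^2$-norm is determined entirely by its integral average; this average coincides with the integral of the trace of $\nabla_w\V{v}$, and Cauchy--Schwarz then extracts precisely the factor $d$ needed to match the stated constant.
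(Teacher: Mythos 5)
Your proposal is correct and follows essentially the same route as the paper: symmetry and positive definiteness via the null-space argument of Lemma~\ref{lem:B0-1}, the trivial lower bound, and an upper bound driven by the inequality $\sum_K\|\nabla_w\cdot\V{v}\|_K^2 \le d\sum_K\|\nabla_w\V{v}\|_K^2$ together with a Rayleigh-quotient manipulation (you phrase it through the variational characterization of $\V{x}^T A^{-1}\V{x}$ and Cauchy--Schwarz with $\|p_h\|$, the paper through the equivalent eigenvalue identity for $(M_p^{\circ})^{-1/2}B^{\circ}A^{-1}(B^{\circ})^T(M_p^{\circ})^{-1/2}$). The only substantive addition is that you actually prove the divergence--gradient inequality, which the paper merely states ``can be verified directly,'' and your argument for it (testing \eqref{weak-grad-1} with constant fields, \eqref{wk_div1} with $w=1$, and using the elementwise constancy of $\nabla_w\cdot\V{v}$ plus Cauchy--Schwarz on the trace sum) is correct.
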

\begin{proof}
It is obvious that $S$ is symmetric and positive semi-definite. We just need to show that $S$ is nonsingular
for its positive definiteness. Assume that $S$ is singular. Then, there exists a non-zero vector $\V{p}$ such that
$\V{p}^T S \V{p} = 0$. This implies that $\mu p_1^2 d_{11} + \V{p}^T B^{\circ} A^{-1} (B^{\circ})^T \V{p} = 0$,
where $p_1$ is the first component of $\V{p}$. Thus,
we have $p_1 = 0$ and $\V{p}^T B^{\circ} A^{-1} (B^{\circ})^T \V{p} = 0$. In other words, $\V{p} \in \text{Null}((B^{\circ})^T)$
and $p_1 = 0$. By Lemma~\ref{lem:B0-1}, this implies $\V{p} = \V{0}$,
which is in contradiction with the fact that $\V{p}$ is a non-zero
vector. Thus, $S$ should be nonsingular and therefore, symmetric and positive definite.

From the definitions of operator $B^{\circ}$ in \eqref{B-1} and the mass matrix  and the fact
$\nabla_w \cdot \V{u} \in P_0(\mathcal{T}_h)$, it is not difficult to get
\begin{align*}
\V{u}^{T}  (B^{\circ})^T (M_p^{\circ})^{-1} B^{\circ} \V{u} = \sum_{K \in \mathcal{T}_h}(\nabla_w \cdot \V{u}, \nabla_w \cdot \V{u})_K .
\end{align*}
Moreover, it can be verified directly that
\begin{align*}
   \sum_{K \in \mathcal{T}_h}(\nabla_w \cdot \mathbf{u_h},\nabla_w \cdot \mathbf{u_h})_K \le
d \sum_{K \in \mathcal{T}_h}(\nabla_w \mathbf{u}_h,\nabla_w \mathbf{u}_h )_K .
\end{align*}
Then, since both $A$ (cf. \eqref{scheme_matrix}) and $M_p^{\circ} = \text{diag}(|K_1|, ..., |K_N|)$
are symmetric and positive definite, we have
\begin{align}
    \sup_{\mathbf{p} \neq 0} \frac{\V{p}^{T}( \mu D +  B^{\circ}{A}^{-1}(B^{\circ})^T) \V{p}}{\V{p}^T M_p^{\circ} \V{p}} 
    & = \sup_{\mathbf{p} \neq 0}  \frac{\V{p}^{T} (M_p^{\circ})^{-\frac12} ( \mu D +  B^{\circ}{A}^{-1}(B^{\circ})^T)
    (M_p^{\circ})^{-\frac12} \V{p}}{\V{p}^T \V{p}}
    \notag
    \\
    & \le \mu \frac{d_{11}}{|K_1|} + \sup_{\mathbf{u} \neq 0} \frac{\V{u}^{T}  (B^{\circ})^T (M_p^{\circ})^{-1} B^{\circ}
    \V{u}}{\V{u}^T A \V{u}} ,
    \notag \\
    & = \mu \frac{d_{11}}{|K_1|} + \sup_{\mathbf{u} \neq 0} \frac{\sum_{K \in \mathcal{T}_h}(\nabla_w \cdot \mathbf{u_h},\nabla_w \cdot \mathbf{u_h})_K}{\sum_{K \in \mathcal{T}_h}(\nabla_w \mathbf{u}_h,\nabla_w \mathbf{u}_h )_K}
    \notag \\
    & \le \mu \frac{d_{11}}{|K_1|} + d,
    \label{AB-1}
\end{align}
which implies $S \le (\mu \frac{d_{11}}{|K_1|} + d )M_p^{\circ}$.
\end{proof}

The focus of this work is on the efficient iterative solution of the saddle point system (\ref{scheme_matrix_2}) with block
Schur complement preconditioning. The solution of general saddle point systems has been extensively studied
and remains a topic of active research; e.g., see review articles \cite{BenziGolubLiesen-2005,Benzi2008}
%MurphyGolubWathen_SISC_2000,SilvesterWathen_SINUM_1994,WathenSilvester_SINUM_1993}
and more recent works \cite{Ainsworth_SINUM_2022,Bacuta-2019,Rhebergen_SISC_2022}.
Most systems that have been studied are either singular systems with a single eigenvalue exactly equal to zero
and other eigenvalues away from zero or nonsingular systems with eigenvalues away from zero.
On the other hand, for the current system (\ref{scheme_matrix_2}), \eqref{lem:S-2-1} does not imply
the spectral equivalent between $\hat{S}$ and $M_p^{\circ}$. As a matter of fact and 
will be seen in later sections, $S$ has a small eigenvalue that approaches zero as $\mu \to 0$.
This almost singular but nonsingular feature makes (\ref{scheme_matrix_2}) distinctive from saddle point systems that have
been well studied and poses challenges in developing effective preconditioners and
convergence analysis of its iterative solution. 

To prepare the discussion on block Schur complement preconditioning for (\ref{scheme_matrix_2}),
we provide a brief discussion on (inexact) block diagonal and block triangular Schur complement preconditioners
for general saddle-point problems in Appendix~\ref{appendix:gmres convergence}.
Particularly, we establish estimates for the residual of GMRES for preconditioned systems
using block upper and lower triangular preconditioners in Lemmas~\ref{lem:gmres-conv-lower} and \ref{lem:gmres-conv-upper}.

In next two sections, we consider block diagonal/triangular Schur complement preconditioners for (\ref{scheme_matrix_2})
and the convergence of MINRES/GMRES accordingly.

%%%%%%%%%%%%%%%%%%%%%%%%%%%%%%%%%%%%%%%%%%%%%%%%%%%%%%%%%%%%%%%
\section{Convergence of MINRES with block diagonal Schur complement preconditioning}
\label{sec::regularization-diagonal}
In this section we consider the block diagonal Schur complement preconditioning for the regularized system \eqref{scheme_matrix_2}.

From Lemma~\ref{lem:S-2}, we take $\hat{S} = M_p^{\circ}$ as an approximation to the Schur complement and
use the block diagonal Schur complement preconditioner 
\begin{align}
    \mathcal{P}_d = \begin{bmatrix}
        A & 0 \\
        0 & M_p^{\circ}
    \end{bmatrix}.
    \label{PrecondPd}
\end{align}
Since $\mathcal{P}_d$ is SPD, the preconditioned system $\mathcal{P}_d^{-1} \mathcal{A}$ is similar to
$\mathcal{P}_d^{-\frac{1}{2}} \mathcal{A} \mathcal{P}_d^{-\frac{1}{2}}$ which can be expressed as
\begin{align}
    \mathcal{P}_d^{-\frac{1}{2}} \mathcal{A} \mathcal{P}_d^{-\frac{1}{2}}&= 
    \begin{bmatrix}
        A^{\frac{1}{2}} & 0 \\
        0 &(M_p^{\circ})^{\frac{1}{2}}
    \end{bmatrix}^{-1}
    \begin{bmatrix}
        A & -(B^{\circ})^T \\
       -B^{\circ} & -\mu D
    \end{bmatrix}
    \begin{bmatrix}
        A^{\frac{1}{2}} & 0 \\
        0 &(M_p^{\circ})^{\frac{1}{2}}
    \end{bmatrix}^{-1} \notag
    \\ 
    & = \begin{bmatrix}
        \mathcal{I} & -A^{-\frac{1}{2}} (B^{\circ})^T(M_p^{\circ})^{-\frac{1}{2}} \\
        -(M_p^{\circ})^{-\frac{1}{2}} B^{\circ}A^{-\frac{1}{2}} & 0
    \end{bmatrix}
    +
    \begin{bmatrix}
        0 & 0 \\
        0 & -\mu (M_p^{\circ})^{-\frac{1}{2}} D (M_p^{\circ})^{-\frac{1}{2}}
    \end{bmatrix}.
    \label{diag-precond-system}
\end{align}
This means that we can use MINRES for the iterative solution of the preconditioned system and analyze
its convergence with spectral analysis.
Since $\mu$ is a small value, the matrix with block $-\mu (M_p^{\circ})^{-\frac{1}{2}} D (M_p^{\circ})^{-\frac{1}{2}}$ can be considered a perturbation of the first matrix in \eqref{diag-precond-system}.
The 
we use the Bauer-Fike theorem  to obtain the eigenvalues of the preconditioned system.

\begin{lem}
\label{lem:eigen_bound_diag}
The eigenvalues of $ \mathcal{P}_d^{-1} \mathcal{A} $ are bounded by 
\begin{align}
    \Bigg[ \frac{1-\sqrt{1+4 d}}{2} - \mu \frac{d_{11}}{|K_1|}, \,
    &\frac{1-\sqrt{1+4 \beta^2}}{2} + \mu \frac{d_{11}}{|K_1|} \Bigg] 
    \cup \Bigg\{  - \mu \frac{d_{11}}{|\Omega|} + \mathcal{O}(\mu^2) \Bigg\} \nonumber \\
    &\cup \Bigg[ \frac{1+\sqrt{1+4 \beta^2}}{2} - \mu \frac{d_{11}}{|K_1|}, \,
    \frac{1+\sqrt{1+4 d}}{2} + \mu \frac{d_{11}}{|K_1|} \Bigg] ,
    \label{eigen_bound_diag-1}
\end{align}
provided that
\begin{align}
\mu \frac{d_{11}}{|K_1|} \ll \frac{\sqrt{1+4 \beta^2}-1}{2} .
\label{eigen_bound_diag-2}
\end{align}
\end{lem}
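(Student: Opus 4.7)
I would start from the symmetrized decomposition in \eqref{diag-precond-system} and write $\mathcal{P}_d^{-1/2}\mathcal{A}\mathcal{P}_d^{-1/2}=M_0+E$, where
\[
M_0=\begin{bmatrix}\mathcal{I} & -\tilde{B}^T\\ -\tilde{B} & 0\end{bmatrix},\qquad \tilde{B}:=(M_p^{\circ})^{-1/2}B^{\circ}A^{-1/2},
\]
and $E$ is the second (diagonal) term in \eqref{diag-precond-system}. The matrix $E$ has exactly one non-zero entry, equal to $-\mu d_{11}/|K_1|$, so $\|E\|_2=\mu d_{11}/|K_1|$. Since $\mathcal{P}_d^{-1}\mathcal{A}$ is similar to $M_0+E$, it suffices to bound the spectrum of $M_0+E$.

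I would then determine the spectrum of $M_0$ via an SVD of $\tilde{B}$. A standard block-diagonalization shows that each singular value $\sigma$ of $\tilde{B}$ contributes a pair of eigenvalues $(1\pm\sqrt{1+4\sigma^2})/2$, while $\mathrm{Null}(\tilde{B}^T)$ contributes eigenvalue $0$ and $\mathrm{Null}(\tilde{B})$ contributes eigenvalue $1$. By Lemma~\ref{lem:B0-1}, $\mathrm{Null}(\tilde{B}^T)$ is one-dimensional (spanned up to the $(M_p^{\circ})^{1/2}$ rescaling by the constant-pressure mode), so the eigenvalue $0$ of $M_0$ is simple. The bound $(B^{\circ})^T(M_p^{\circ})^{-1}B^{\circ}\le d\,A$ extracted in the proof of Lemma~\ref{lem:S-2} gives $\sigma^2\le d$, and the LBB inequality cited after Lemma~\ref{pro:err1} gives $\sigma^2\ge\beta^2$ for every non-zero $\sigma$. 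Therefore the two clusters of non-trivial paired eigenvalues of $M_0$ lie in $[(1-\sqrt{1+4d})/2,(1-\sqrt{1+4\beta^2})/2]$ and $[(1+\sqrt{1+4\beta^2})/2,(1+\sqrt{1+4d})/2]$.

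Applying the Bauer--Fike theorem (valid because $M_0$ is symmetric) then shows that every eigenvalue of $M_0+E$ is within $\|E\|_2=\mu d_{11}/|K_1|$ of some eigenvalue of $M_0$. Thickening the two paired clusters above by $\mu d_{11}/|K_1|$ in each direction yields the first and third sets in \eqref{eigen_bound_diag-1}; eigenvalues near $1$ coming from $\mathrm{Null}(\tilde{B})$ remain within $\mu d_{11}/|K_1|$ of $1$ by the same reasoning.

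The real obstacle is the simple zero eigenvalue, for which the raw Bauer--Fike bound $[-\mu d_{11}/|K_1|,\mu d_{11}/|K_1|]$ is too coarse to produce the sharper $-\mu d_{11}/|\Omega|+O(\mu^2)$. I would refine it via first-order analytic perturbation theory. The normalized null eigenvector of $M_0$ is
\[
\hat{v}=|\Omega|^{-1/2}\bigl(\mathbf{0},\,(M_p^{\circ})^{1/2}\mathbf{1}\bigr)^T,
\]
using $\mathbf{1}^T M_p^{\circ}\mathbf{1}=|\Omega|$ (Lemma~\ref{lem:B0-1}). Hypothesis \eqref{eigen_bound_diag-2} ensures $\|E\|_2$ is strictly below the spectral gap $(\sqrt{1+4\beta^2}-1)/2$ separating $0$ from the rest of the spectrum of $M_0$, so the zero eigenvalue branch is real-analytic in $\mu$ with leading correction
\[
\lambda(\mu)=\hat{v}^T E\,\hat{v}+O(\mu^2)=-\frac{\mu\,d_{11}}{|K_1|}\cdot\frac{|K_1|}{|\Omega|}+O(\mu^2)=-\frac{\mu\,d_{11}}{|\Omega|}+O(\mu^2),
\]
where the factor $|K_1|/|\Omega|$ is the squared weight placed by $\hat{v}$ on the first-element pressure coordinate. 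This replacement of the local $|K_1|$ by the global $|\Omega|$ is the crucial refinement, reflecting that the highly localized perturbation $E$ is averaged against the global constant mode, and it gives the middle singleton in \eqref{eigen_bound_diag-1}.
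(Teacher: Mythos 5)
Your route is essentially the paper's own: the paper also splits $\mathcal{P}_d^{-1/2}\mathcal{A}\mathcal{P}_d^{-1/2}$ into the $\mu=0$ part plus the block perturbation $-\mu(M_p^{\circ})^{-1/2}D(M_p^{\circ})^{-1/2}$ of norm $\mu d_{11}/|K_1|$, obtains the unperturbed spectrum from the relation $\lambda^2-\lambda=\gamma_i$ with $\gamma_i\in\{0\}\cup[\beta^2,d]$ (your SVD of $\tilde{B}=(M_p^{\circ})^{-1/2}B^{\circ}A^{-1/2}$ is the same computation), applies Bauer--Fike, and then sharpens the simple near-zero eigenvalue by first-order perturbation with the constant-pressure eigenvector $\V{v}_1$ of \eqref{v1-eigen}, getting $\partial_\mu\lambda_1(0)=-d_{11}/|\Omega|$. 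Your Rayleigh-quotient step $\hat{v}^TE\hat{v}=-\mu d_{11}/|\Omega|$ is exactly the paper's computation (the paper differentiates the eigenvalue equation and projects onto $[\V{0};\V{v}_1]$), and your justification of the analyticity of that branch via simplicity and the spectral gap is, if anything, more careful than the paper's.

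The one point of divergence is the eigenvalue $\lambda=1$. The paper asserts that $\lambda=1$ is not an eigenvalue of the unperturbed problem; you correctly observe that it is, with eigenvectors $[\V{u};\V{0}]$, $\V{u}\in\mathrm{Null}(\tilde{B})\neq\{0\}$, so on this point you are right and the paper is not. However, your closing remark that these eigenvalues ``remain within $\mu d_{11}/|K_1|$ of $1$'' does not finish the job: under \eqref{eigen_bound_diag-2} the point $1$ and its $\mu d_{11}/|K_1|$-neighborhood lie strictly below the left endpoint $\tfrac{1+\sqrt{1+4\beta^2}}{2}-\mu\tfrac{d_{11}}{|K_1|}$ of the third interval, so they are not contained in \eqref{eigen_bound_diag-1} at all. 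In fact, since the perturbation annihilates $[\V{u};\V{0}]$ for $\V{u}\in\mathrm{Null}(\tilde{B})$, $\lambda=1$ remains an exact eigenvalue of the preconditioned matrix; the set in \eqref{eigen_bound_diag-1} should therefore be augmented by $\{1\}$ (equivalently, the right interval extended down to $1$). This is a defect of the lemma as stated and of the paper's proof, which your argument exposes rather than creates, but it should be stated explicitly rather than folded into the intervals; it is largely harmless downstream, since the extra eigenvalue at $1$ is far from the origin and the subsequent MINRES estimate survives after enlarging the right cluster accordingly.
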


\begin{proof}
First, we consider the eigenvalue problem of the unperturbed preconditioned system (i.e., $\mu = 0$),
\begin{align}
    \begin{bmatrix}
        A & -(B^{\circ})^T \\
       -B^{\circ} &  0 
    \end{bmatrix}
    \begin{bmatrix}
        \V{u} \\
        \V{p}
    \end{bmatrix} = \lambda
   \begin{bmatrix}
        A & 0 \\
        0 & M_p^{\circ}
    \end{bmatrix} 
        \begin{bmatrix}
        \V{u} \\
        \V{p}
    \end{bmatrix}
    \label{mu0system}
\end{align}
It is readily seen that $\lambda = 1$ is not an eigenvalue to the problem. 
Then the eigenvalues satisfy
\begin{align}
    \lambda (\lambda - 1) M_p^{\circ} \V{p} = B^{\circ}A^{-1}(B^{\circ})^T\V{p} .
    \notag
\end{align}
Denoting the eigenvalues of $(M_p^{\circ})^{-1}B^{\circ} A^{-1} (B^{\circ})^T $ by $\gamma_1 = 0 < \gamma_2 \le \cdots \le \gamma_N$.
From (\ref{AB-1}),
we have $\gamma_N \le d$. Moreover,
the smallest positive eigenvalue of $(M_p^{\circ})^{-1/2} B^{\circ} A^{-1} (B^{\circ})^T$ is equal to
that of $A^{-1/2} (B^{\circ})^T (M_p^{\circ})^{-1} B^{\circ} A^{-1/2}$, with the latter being square
of the inf-sup constant $\beta$. Thus, $\gamma_2 = \beta^2$.
The inf-sup condition for the WG approximation of the Stokes problem has been proved
in \cite{WANG202290}.
Therefore, eigenvalues of \eqref{mu0system} satisfy
\begin{align}
    \lambda^2 - \lambda - \gamma_i = 0, \quad i = 2, ..., N,
    \notag
\end{align}
or
\begin{align}
    \lambda = \frac{1 \pm \sqrt{1+4 \gamma_i}}{2},
    \notag
\end{align}
where $\gamma_1 = 0$ and $\gamma_i \in [\beta^2,d]$ for $i = 2, ..., N$.
From this, we obtain the bounds for the eigenvalues of \eqref{mu0system} as
\begin{align}
    \Bigg[ \frac{1-\sqrt{1+4 d}}{2}, \frac{1-\sqrt{1+4 \beta^2}}{2}\Bigg]\cup \{ 0 \}\cup \Bigg [ \frac{1+\sqrt{1+4 \beta^2}}{2}, \frac{1+\sqrt{1+4 d}}{2}\Bigg] .
    \label{eig_1}
\end{align}

Now we turn back to \eqref{diag-precond-system}.
Notice that
\[
\| \mu (M_p^{\circ})^{-\frac{1}{2}} D (M_p^{\circ})^{-\frac{1}{2}} \| \le \mu \frac{d_{11}}{|K_1|} .
\]
By the Bauer-Fike theorem (e.g., see \cite[Corollary 6.5.8]{Watkins-2010}), we know that if (\ref{eigen_bound_diag-2}) is satisfied,
the eigenvalues of \eqref{diag-precond-system} are bounded by
\begin{align}
    \Bigg[ \frac{1-\sqrt{1+4 d}}{2} - \mu \frac{d_{11}}{|K_1|}, \,
    &\frac{1-\sqrt{1+4 \beta^2}}{2} + \mu \frac{d_{11}}{|K_1|} \Bigg] 
    \cup \{ \lambda_1(\mu) \} \nonumber \\
    &\cup \Bigg[ \frac{1+\sqrt{1+4 \beta^2}}{2} - \mu \frac{d_{11}}{|K_1|}, \,
    \frac{1+\sqrt{1+4 d}}{2} + \mu \frac{d_{11}}{|K_1|} \Bigg],
    \label{eig_2}
\end{align}
which gives (\ref{eigen_bound_diag-1}) except for the eigenvalue $\lambda_1(\mu)$,
a perturbation of the zero eigenvalue. 

Now, we estimate $\lambda_1(\mu)$. Denote $ \mathcal{B}(\mu)  = \mathcal{P}_d^{-1/2} \mathcal{A} \mathcal{P}_d^{-1/2}$.
Consider the eigenvalue problem
\begin{align*}
    \mathcal{B}(\mu) 
\begin{bmatrix}
    \V{u}(\mu) \\ \V{p}(\mu)
\end{bmatrix} = 
\lambda_1(\mu) \begin{bmatrix}
    \V{u}(\mu) \\ \V{p}(\mu)
\end{bmatrix}.
%\label{eigenvalue_sys}
\end{align*}
Differentiating the above equation with respect to $\mu$, we have
\begin{align}
    \frac{\partial \mathcal{B}}{\partial \mu} (\mu) \begin{bmatrix}
    \V{u}(\mu) \\ \V{p}(\mu)
\end{bmatrix}  +  \mathcal{B}(\mu) 
\begin{bmatrix}
    \frac{\partial \V{u} }{\partial \mu}(\mu) \\ \frac{\partial \V{p}}{\partial \mu} (\mu)
\end{bmatrix}  
 = 
    \frac{\partial \lambda_1}{\partial \mu}(\mu) \begin{bmatrix}
    \V{u}(\mu) \\ \V{p}(\mu) 
\end{bmatrix} 
+ \lambda_1(\mu)
\begin{bmatrix}
    \frac{\partial \V{u} }{\partial \mu}(\mu) \\ \frac{\partial \V{p}}{\partial \mu} (\mu)
\end{bmatrix}    .
\label{eigenvalue_sys_1}
\end{align}
Notice that $\lambda_1(0) = 0$, $\V{u}(0) = 0$, and $\V{p}(0) = \V{v}_1$, where
\begin{align}
    \V{v}_1 = \frac{1}{\sqrt{|\Omega|}} \left (
    |K_1|^{\frac 1 2}, ..., |K_N|^{\frac 1 2} \right )^T .
    \label{v1-eigen}
\end{align}
Taking $\mu = 0$ in (\ref{eigenvalue_sys_1}) and multiplying with $[0, \V{v}_1^T]$ from left, we get
\begin{align*}
   &\begin{bmatrix}
    0 & \V{v}_1^T
\end{bmatrix}  \frac{\partial \mathcal{B}}{\partial \mu} (0) \begin{bmatrix}
    0 \\ \V{v}_1 
\end{bmatrix}  +  \begin{bmatrix}
    0 & \V{v}_1^T
\end{bmatrix}  \mathcal{B}(0)  
\begin{bmatrix}
    \frac{\partial \V{u} }{\partial \mu} (0) \\ \frac{\partial \V{p} }{\partial \mu} (0)
\end{bmatrix}\begin{bmatrix}
    0 \\ \V{v}_1 
\end{bmatrix}  \notag \\
 & = 
     \begin{bmatrix}
    0 & \V{v}_1^T
\end{bmatrix}  \frac{\partial \lambda_1}{\partial \mu}(0) \begin{bmatrix}
    0 \\ \V{v}_1 
\end{bmatrix} 
+ \lambda_1(0) \begin{bmatrix}
    0 & \V{v}_1^T
\end{bmatrix}
\begin{bmatrix}
    \frac{\partial \V{u}}{\partial \mu}(0) \\ \frac{\partial \V{p}}{\partial \mu}(0)
\end{bmatrix}  \begin{bmatrix}
    0 \\ \V{v}_1 
\end{bmatrix} .
% \label{eigenvalue_sys_2}
\end{align*}
From this and the fact that
\[
\frac{\partial \mathcal{B}}{\partial \mu} = 
\begin{bmatrix}
    0 & 0 \\
    0 & - (M_p^{\circ})^{-\frac{1}{2}} D (M_p^{\circ})^{-\frac{1}{2}}
\end{bmatrix},
\]
we obtain
\begin{align*}
   \frac{\partial \lambda_1}{\partial \mu} ( 0)\V{v}_1^T \V{v}_1 = -\V{v}_1^T (M_p^{\circ})^{-\frac{1}{2}} D (M_p^{\circ})^{-\frac{1}{2}} \V{v}_1 ,
\end{align*}
which, with the expressions of $\V{v}_1$, $D$, and $M_p^{\circ}$, yields
\begin{align*}
    \frac{\partial \lambda_1}{\partial \mu}( 0) = -\frac{d_{11}}{|\Omega|} .
\end{align*}
This gives the estimate of $\lambda_1(\mu)$ in (\ref{eigen_bound_diag-1}).
\end{proof}

\begin{pro}
    \label{MINRES_conv}
Assume that (\ref{eigen_bound_diag-2}) is satisfied.
Then the residual of MINRES applied to the preconditioned system $\mathcal{P}_{d}^{-1/2} \mathcal{A}\mathcal{P}_{d}^{-1/2} $ is bounded by
\begin{align}
    \frac{\| \V{r}_{2k+1}\| }{\| \V{r}_0 \|} \; {\stackrel{<}{\sim}} \; \frac{2|\Omega| (d+\mu \frac{d_{11}}{|K_1|} + \mu \frac{d_{11}}{|\Omega|})}{\mu d_{11}}
     \Bigg( \frac{\sqrt{d} - \beta}{\sqrt{d} + \beta}\Bigg)^k .
     \label{MINRES_res}
\end{align}
\end{pro}

\begin{proof}
It is known \cite{MINRES-1975} that the residual of MINRES is given by
\[
\| \V{r}_{2k+1}\| = \min\limits_{\substack{p \in \mathbb{P}_{2k+1}\\ p(0) = 1}} \| p (\mathcal{P}_{d}^{-1}\mathcal{A})  \V{r}_0 \|
\le \min\limits_{\substack{p \in \mathbb{P}_{2k+1}\\ p(0) = 1}} \| p (\mathcal{P}_{d}^{-1} \mathcal{A})\| \; \| \V{r}_0 \|,
\]
where $\mathbb{P}_{2k+1}$ is the set of polynomials of degree up to $2k+1$.
Denote the eigenvalues of $\mathcal{P}_{d}^{-1/2} \mathcal{A}\mathcal{P}_{d}^{-1/2} $ by $\lambda_i$, $ i = 1, ..., 2N-1$.
Also denote the intervals in (\ref{eigen_bound_diag-1}) (except for the eigenvalue near zero) by $[a_1,b_1]\cup [a_2,b_2]$.
From Theorem 6.13 of \cite{Elman-2014} (about the residual of MINRES) and
Lemma~\ref{lem:eigen_bound_diag}, we have
\begin{align}
    \frac{\| \V{r}_{2k+1}\| }{\| \V{r}_0 \|} &\le \min\limits_{\substack{p \in \mathbb{P}_{2k+1}\\ p(0) = 1}} \max\limits_{ i = 1,2,..,2N-1} | p (\lambda_i)| \notag
\\
 &\le
    \min\limits_{\substack{p \in \mathbb{P}_{2k}\\ p(0) = 1}} 
    \max_{i = 2, ..., 2 N-1} |\frac{(\lambda_i - \lambda_1)}{\lambda_1} p(\lambda_i) | \nonumber
    \\
   &\le \Big|\frac{d+ \mu \frac{d_{11}}{|K_1|} + \mu \frac{d_{11}}{|\Omega|}+ \mathcal{O}(\mu^2)}{\mu \frac{d_{11}}{|\Omega|} + \mathcal{O}(\mu^2)}\Big|
   \min\limits_{\substack{p \in \mathbb{P}_{2k}\\ p(0) = 1}} 
    \max_{\lambda \in  [a_1,b_1]\cup [a_2,b_2]} | p(\lambda) | \nonumber 
    \\
  &   \le 2\Big|\frac{d+ \mu \frac{d_{11}}{|K_1|} + \mu \frac{d_{11}}{|\Omega|}+ \mathcal{O}(\mu^2)}{\mu \frac{d_{11}}{|\Omega|} + \mathcal{O}(\mu^2)}\Big| \notag
  \\
  & \qquad \cdot
     \Bigg( \frac{\sqrt{(\mu \frac{d_{11}}{|K_1|})^2 + \mu\frac{d_{11}}{|K_1|} \sqrt{1+4d} + d } - \sqrt{(\mu \frac{d_{11}}{|K_1|})^2 - \mu\frac{d_{11}}{|K_1|} \sqrt{1+4\beta^2} + \beta^2 }}{\sqrt{(\mu \frac{d_{11}}{|K_1|})^2 + \mu\frac{d_{11}}{|K_1|} \sqrt{1+4d} + d } + \sqrt{(\mu \frac{d_{11}}{|K_1|})^2 - \mu\frac{d_{11}}{|K_1|} \sqrt{1+4\beta^2} + \beta^2 }} \Bigg)^k,
\notag
\end{align}
which leads to (\ref{MINRES_res}).
\end{proof}

It is worth mentioning that $\| \V{r}_{2k+2}\| \le \| \V{r}_{2k+1}\|$ holds
due to the minimization property of MINRES. 
Moreover, Proposition~\ref{MINRES_conv} indicates that the convergence factor of MINRES
applied to the regularized system  (\ref{scheme_matrix_2})
with the block diagonal preconditioner (\ref{PrecondPd}) is almost independent of $h$ and $\mu$.
On the other hand, the asymptotic error constant in \eqref{MINRES_res} appears to be related to $\mu$ and $h$. 
Since the number of MINRES iterations required to reach a prescribed level of the residual
is proportional to the logarithm of the asymptotic error constant, i.e., $\log (\mu d_{11})$.
This dependence is weak and acceptable in practical computation. 

Interestingly, there is no obvious optimal choice of $d_{11}$ to make $\log (\mu d_{11})$
to be independent of $h$ and $\mu$ while satisfying the condition (\ref{eigen_bound_diag-2})
(under which (\ref{MINRES_res}) is valid). 
Although this condition is needed only for the purpose
of theoretical analysis (i.e., it is not needed in the actual computation), we do not want to choose $d_{11}$ too large
so the eigenvalues of the preconditioned system spread out over places and get close to the origin.
One obvious choice is $d_{11} = |K_1|$. For this case, $\log (\mu d_{11}) = \log (\mu |K_1|)$
and (\ref{eigen_bound_diag-2}) becomes $\mu \ll \frac{1}{2} (\sqrt{1+4\beta^2}-1)$.
Another choice is $d_{11} = 1$, for which we have $\log (\mu d_{11}) = \log (\mu)$ 
but (\ref{eigen_bound_diag-2}) becomes $\mu \ll \frac{1}{2} (\sqrt{1+4\beta^2}-1) |K_1|$, which holds
for much smaller $\mu$ than in the previous choice.
Once again, for both cases, the number of MINRES iterations required to reach a prescribed level of the residual
depends only logarithmically on $\mu$ and/or $h$.

%%%%%%%%%%%%%%%%%%%%%%%%%%%%%%%%%%%%%%%%%%%%%%%%%%%%%%%%%%%%%%%%%
\section{Convergence of GMRES with block triangular Schur complement preconditioning}
\label{sec::regularization-triagle}
In this section we consider the block lower triangular Schur complement preconditioner,
\begin{align}
    \mathcal{P}_t = \begin{bmatrix}
        A & 0 \\
        -B^{\circ} & -M_p^{\circ}
    \end{bmatrix}.
    \label{PrecondP}
\end{align}
As discussed in Appendix~\ref{appendix:gmres convergence}, all four block triangular Schur complement
preconditioners in (\ref{SPP-4})
will perform similarly. To be specific, we only consider (\ref{PrecondP}) here.
It should be pointed out that with block triangular preconditioners, the corresponding preconditioned system
is no longer diagonalizable in general. This means that we need to use GMRES to solve the system.
Moreover, the spectral analysis is insufficient to determine the convergence of GMRES.
Fortunately, Lemma~\ref{lem:gmres-conv-lower} allows us to analyze the convergence of GMRES
for block lower triangular preconditioners through $\| A^{-1} (B^{\circ})^T \|$,
$\| \hat{S}^{-1}S\|$, and $\min_{p} \| p(\hat{S}^{-1}S) \|$.
This can be done similarly for block upper triangular preconditioners; cf. Lemma~\ref{lem:gmres-conv-upper}.

Recall that $ S = \mu D + B^{\circ} A^{-1} (B^{\circ})^T$ and $\hat{S} = M_p^{\circ}$.
For $\| A^{-1} (B^{\circ})^T \|$, using (\ref{AB-1}) and the fact that $A$ and $M_p^{\circ}$ are SPD,  we have
\begin{align}
    \| A^{-1} (B^{\circ})^T \|^2 
    &= \sup_{\V{p} \neq 0} \frac{\V{p}B^{\circ} A^{-1} A^{-1} (B^{\circ})^T \V{p}}{\V{p}^T  \V{p}} \nonumber
    \\
   & = \sup_{\V{p} \neq 0} \frac{\V{p} (M_p^{\circ})^{\frac{1}{2}} (M_p^{\circ})^{-\frac{1}{2}}B^{\circ} A^{-1}A^{-1} (B^{\circ})^T (M_p^{\circ})^{-\frac{1}{2}} (M_p^{\circ})^{\frac{1}{2}} \V{p}}{\V{p}^T \V{p}}
   \notag \\
   & \le \lambda_{\max} (A^{-1}) \lambda_{\max} (M_p^{\circ})
 \sup_{\V{u} \neq 0} \frac{\V{u}^T B^{\circ} (M_p^{\circ})^{-1} (B^{\circ})^T \V{u}}{\V{u}^T A\V{u}}
 \notag \\
 & \le \frac{d\, \lambda_{\max} (M_p^{\circ})}{\lambda_{\min} (A)} 
    \label{bound2}.
\end{align}
Moreover, from Lemma~\ref{lem:S-2} we have
\[
\| \hat{S}^{-1} S \| \le d+\mu \frac{d_{11}}{|K_1|} .
\]
Using the above results and Lemma~\ref{lem:gmres-conv-lower}, we obtain
\begin{align}
    \label{GMRES-residual-3}
    \frac{\| \V{r}_k\|}{\| \V{r}_0\|}
    \le \left (1+d+\mu \frac{d_{11}}{|K_1|} + \Big (\frac{d\, \lambda_{\max} (M_p^{\circ})}{\lambda_{\min} (A)}\Big )^\frac{1}{2}\right ) 
    \min\limits_{\substack{p \in \mathbb{P}_{k-1}\\ p(0) = 1}} \| p(\hat{S}^{-1}S) \| .
\end{align}

We estimate the eigenvalues of  $\hat{S}^{-1} S$ in the following.

\begin{lem}
\label{eigen_bound}
The eigenvalues of $\hat{S}^{-1} S$, $0 < \lambda_1 < \lambda_2 \le ... \le \lambda_N$, are bounded by
\begin{align}
         \mu\frac{d_{11}}{|\Omega|} \le \lambda_1 \le \mu \frac{d_{11}}{|K_1|}, \quad
\beta^2 - \mu\frac{d_{11}}{|K_1|} \le \lambda_i \le d +  \mu \frac{d_{11}}{|K_1|}, \quad i = 2, ..., N.
\label{eigen_bound-2}
\end{align}
\end{lem}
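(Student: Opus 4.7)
My plan is to transfer the generalized eigenvalue problem $S \V{p} = \lambda M_p^{\circ} \V{p}$ to a symmetric eigenvalue problem for the similar matrix $\mathcal{M}(\mu) := (M_p^{\circ})^{-1/2} S (M_p^{\circ})^{-1/2}$, and then to treat $\mathcal{M}(\mu) = X + \mu Z$ as a small symmetric rank-one perturbation of its $\mu = 0$ value, where
\begin{equation*}
X = (M_p^{\circ})^{-1/2} B^{\circ} A^{-1} (B^{\circ})^T (M_p^{\circ})^{-1/2}, \qquad \mu Z = \frac{\mu\, d_{11}}{|K_1|}\, \V{e}_1 \V{e}_1^T .
\end{equation*}
The rank-one form of $\mu Z$ comes from $D = d_{11}\, \V{e}_1 \V{e}_1^T$, and the spectrum of $\hat{S}^{-1}S$ coincides with that of $\mathcal{M}(\mu)$ by similarity, so it suffices to analyze the latter.

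The spectral data of the unperturbed matrix $X$ are exactly those harvested in the proof of Lemma~\ref{lem:eigen_bound_diag}. By Lemma~\ref{lem:B0-1}, $\mathrm{Null}((B^{\circ})^T) = \mathrm{span}(\V{1})$, so the smallest eigenvalue of $X$ is zero with normalized eigenvector $\V{u}_1 = |\Omega|^{-1/2}(|K_1|^{1/2}, \dots, |K_N|^{1/2})^T$; by the inf-sup condition for the WG discretization and by the bound (\ref{AB-1}), the remaining eigenvalues $\gamma_2 \le \dots \le \gamma_N$ of $X$ lie in $[\beta^2, d]$. I would then apply Weyl's inequality for symmetric matrices: since the spectral norm of $\mu Z$ equals $\mu d_{11}/|K_1|$, one has $|\lambda_i - \gamma_i| \le \mu d_{11}/|K_1|$ for every $i$, which immediately produces the bulk estimate $\beta^2 - \mu d_{11}/|K_1| \le \lambda_i \le d + \mu d_{11}/|K_1|$ for $i \ge 2$, together with the upper bound $\lambda_1 \le \mu d_{11}/|K_1|$.

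The hard part will be the lower bound $\lambda_1 \ge \mu d_{11}/|\Omega|$, because Weyl's inequality alone only gives the vacuous $\lambda_1 \ge -\mu d_{11}/|K_1|$. To obtain it, I would exploit the rank-one structure of the perturbation and mimic the first-order eigenvalue perturbation argument already used in Lemma~\ref{lem:eigen_bound_diag}: differentiating $\mathcal{M}(\mu)\V{w}(\mu) = \lambda_1(\mu)\V{w}(\mu)$ in $\mu$ at $\mu = 0$ and pairing with $\V{u}_1$ gives
\begin{equation*}
\frac{d\lambda_1}{d\mu}(0) = \V{u}_1^T Z\, \V{u}_1 = \frac{d_{11}}{|\Omega|} ,
\end{equation*}
since the first component of $\V{u}_1$ equals $\sqrt{|K_1|/|\Omega|}$. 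This yields $\lambda_1(\mu) = \mu d_{11}/|\Omega| + O(\mu^2)$ as $\mu \to 0$. To promote this asymptotic statement to the stated inequality, I would analyze the secular equation for the rank-one update $X + (\mu d_{11}/|K_1|)\V{e}_1\V{e}_1^T$, which identifies $\lambda_1$ as the unique root in $(0, \gamma_2)$ of a strictly monotone rational function whose coefficients $|\langle \V{e}_1, \V{u}_i \rangle|^2$ sum to one and satisfy $|\langle \V{e}_1, \V{u}_1 \rangle|^2 = |K_1|/|\Omega|$; alternatively, a Bauer--Fike type argument analogous to the one used in Lemma~\ref{lem:eigen_bound_diag} for the near-zero eigenvalue of the preconditioned saddle-point system should also deliver the bound.
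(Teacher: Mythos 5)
Your reduction to the symmetric matrix $\mathcal{M}(\mu) = X + \mu Z$, the identification of the spectrum of $X$ ($\gamma_1 = 0$ with normalized eigenvector $\V{v}_1$, and $\gamma_i \in [\beta^2, d]$ for $i \ge 2$), and the use of Weyl's inequality to get the bulk bounds and $\lambda_1 \le \mu d_{11}/|K_1|$ all match the paper's proof, which does exactly this with the Bauer--Fike theorem in place of Weyl; that part is sound.

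The genuine gap is the lower bound $\lambda_1 \ge \mu d_{11}/|\Omega|$, which is the heart of the lemma and which your proposal does not actually prove. The derivative computation $\lambda_1(\mu) = \mu d_{11}/|\Omega| + O(\mu^2)$ is only asymptotic, as you acknowledge, and neither of your suggested upgrades closes the gap. A Bauer--Fike type argument cannot: its perturbation radius is $\mu d_{11}/|K_1| \ge \mu d_{11}/|\Omega|$, so it can never resolve a lower bound of the claimed size (it only gives the vacuous $\lambda_1 \ge -\mu d_{11}/|K_1|$, plus positivity from Lemma~\ref{lem:S-2}). The secular-equation route, once written out, points the other way: with $\rho = \mu d_{11}/|K_1|$ and $\zeta_i = \V{v}_i^T \V{e}_1$, the root in $(0,\gamma_2)$ satisfies $\lambda_1 = \rho \zeta_1^2 \big/ \big(1 + \rho \sum_{i \ge 2} \zeta_i^2/(\gamma_i - \lambda_1)\big)$, whose immediate consequence is $\lambda_1 \le \rho \zeta_1^2 = \mu d_{11}/|\Omega|$, i.e.\ an \emph{upper} bound by the target constant, and only a lower bound carrying an extra factor $\big(1 + O(\mu)\big)^{-1}$; so asserting that the secular equation ``should also deliver the bound'' is not a proof and, taken literally, would not deliver the stated inequality. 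The paper instead proves the lower bound by a direct Courant--Fischer/Rayleigh-quotient estimate: it writes a unit vector as $\V{v} = \alpha \V{v}_1 + \V{w}$ with $\V{w}$ in the span of $\V{v}_2, \dots, \V{v}_N$, bounds the quadratic form below by $\frac{\mu d_{11}}{|K_1|}\big(\alpha \sqrt{|K_1|/|\Omega|} + w_1\big)^2 + \beta^2 \V{w}^T \V{w}$, and then explicitly minimizes this two-variable function of $\alpha$ and the first component $w_1$ of $\V{w}$ over an enlarged feasible set, obtaining the value $\mu d_{11}/|\Omega|$. That explicit minimization (or an equally concrete substitute) is the missing ingredient; without it the left inequality in (\ref{eigen_bound-2}) remains unestablished in your write-up.
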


\begin{proof}
From previous section, we know eigenvalues of 
\begin{align}
(M_p^{\circ})^{-\frac{1}{2}}B^{\circ} A^{-1} (B^{\circ})^T(M_p^{\circ})^{-\frac{1}{2}}
\label{M-0}
\end{align}
are  $\gamma_1 = 0$ and $\gamma_i \in [\beta^2, d]$ for $i = 2, ..., N$, where $\beta$ is the inf-sup constant.
Moreover, $\hat{S}^{-1} S$ is similar to 
\begin{align}
\mu (M_p^{\circ})^{-\frac{1}{2}} D (M_p^{\circ})^{-\frac{1}{2}} +
(M_p^{\circ})^{-\frac{1}{2}}B^{\circ} A^{-1} (B^{\circ})^T(M_p^{\circ})^{-\frac{1}{2}} ,
\label{eigen_4}
\end{align}
where the first term can be viewed as a perturbation of the second term.
Using the Bauer-Fike theorem (e.g., see \cite[Corollary 6.5.8]{Watkins-2010})
and the fact that $S$ is symmetric and positive definite (cf. Lemma~\ref{lem:S-2}),
we obtain the bounds for the eigenvalues of $\hat{S}^{-1}S$ as
\begin{align*}
0 < \lambda_1 \le \mu\frac{d_{11}}{|K_1|}, \quad
\beta^2 - \mu\frac{d_{11}}{|K_1|} \le \lambda_i \le d +  \mu\frac{d_{11}}{|K_1|}, \quad i = 2, ..., N,
%\label{eigen_range}
\end{align*}
which gives (\ref{eigen_bound-2}) except for the lower bound of $\lambda_1$.

For the lower bound of $\lambda_1$, recall that $\V{v}_1$ defned in (\ref{v1-eigen})
is an normalized eigenvector of the matrix (\ref{M-0})
associated with the zero eigenvalue. Denote the other normalized eigenvectors of the matrix 
by $\V{v}_i$, $i = 2,3,...,N$, i.e.,
\begin{align*}
    (M_p^{\circ})^{-\frac{1}{2}}B^{\circ} A^{-1} (B^{\circ})^T(M_p^{\circ})^{-\frac{1}{2}} \V{v}_i = \gamma_i \V{v}_i, \quad i = 2,...,N .
\end{align*}
Then, any vector $\V{v}\in \mathbb{R}^{N}$ can be expressed as
\[
\V{v} = \alpha \V{v}_1 + \V{w},\quad \V{w} = \sum_{i = 2}^N \alpha_i \V{v}_i .
\]
We have
\begin{align*}
& \V{v}^T \Big(\mu (M_p^{\circ})^{-\frac{1}{2}} D (M_p^{\circ})^{-\frac{1}{2}} +
(M_p^{\circ})^{-\frac{1}{2}}B^{\circ} A^{-1} (B^{\circ})^T(M_p^{\circ})^{-\frac{1}{2}}\Big) \V{v}
\\
& = \frac{\mu d_{11}}{|K_1|} \left ( \alpha \sqrt{ \frac{|K_1|}{|\Omega|}} + w_1 \right )^2 + \V{w}^T  (M_p^{\circ})^{-\frac{1}{2}}B^{\circ} A^{-1} (B^{\circ})^T(M_p^{\circ})^{-\frac{1}{2}} \V{w}
\\
& \ge \frac{\mu d_{11}}{|K_1|} \left ( \alpha \sqrt{ \frac{|K_1|}{|\Omega|}} + w_1 \right )^2 
+ \beta^2 \V{w}^T \V{w} ,
\end{align*}
where $w_1$ is the first component of $\V{w}$.
From this, we have 
\begin{align*}
    \lambda_1 
    &= \min_{\V{v}^T\V{v} = 1} \V{v}^T \Big(\mu (M_p^{\circ})^{-\frac{1}{2}} D (M_p^{\circ})^{-\frac{1}{2}} +
(M_p^{\circ})^{-\frac{1}{2}}B^{\circ} A^{-1} (B^{\circ})^T(M_p^{\circ})^{-\frac{1}{2}}\Big) \V{v}
\notag 
\\ \displaystyle
& \ge   \min_{\alpha^2 + \V{w}^T \V{w} = 1} \frac{\mu d_{11}}{|K_1|} \left (\alpha \sqrt{ \frac{|K_1|}{|\Omega|}} + w_1\right )^2 + 
\beta^2 \V{w}^T \V{w} .
\end{align*}
We enlarge the search domain and get
\begin{align}
\nonumber
    \lambda_1
  \ge   \min\limits_{\substack{0 \le \alpha^2 \le 1\\[0.05in] 0 \le w_1^2 \le 1-\alpha^2}}
  \frac{\mu d_{11}}{|K_1|} \left (\alpha \sqrt{ \frac{|K_1|}{|\Omega|}} + w_1\right )^2 + 
\beta^2 (1-\alpha^2) .
\end{align}
The only critical point within the search domain is $(\alpha,w_1) = (0,0)$, and the resulting value
of the objective function is $\beta^2$.
Comparing the values of the objective function on the boundary of the search domain,
we find the minimum value ${\mu d_{11}}/{|\Omega|}$ is reached at $w_1 = 0$ and $\alpha = 1$.
\end{proof}

Now, we turn our attention back to (\ref{GMRES-residual-3}) to derive the bound for the residual of GMRES.

\begin{pro}
    \label{GMRES_conv}
Assume that $ \mu \frac{d_{11}}{|K_1|} \ll \beta^2$ is satisfied.
Then, the residual of GMRES applied to the preconditioned system $\mathcal{P}_{t}^{-1} \mathcal{A}$ is bounded by
\begin{align}
\frac{\| \V{r}_k\|}{\| \V{r}_0\|} \; {\stackrel{<}{\sim}} \; \frac{2 |\Omega|(d+1)\left (d+2+\Big (\frac{d\, \lambda_{\max} (M_p^{\circ})}{\lambda_{\min} (A)}\Big )^\frac{1}{2}\right )}{\mu d_{11}} \left ( \frac{\sqrt{d}-\beta}{\sqrt{d}+\beta}\right )^{k-2} .
\label{GMRES-residual-5}
\end{align}
\end{pro}

\begin{proof}
The minmax problem in (\ref{GMRES-residual-3}) can be solved using shifted Chebyshev polynomials (e.g., see \cite[Pages 50-52]{Greenbaum-1997}). We have
\begin{align}
    \min\limits_{\substack{p \in \mathbb{P}_{k-1}\\ p(0) = 1}} \| p(\hat{S}^{-1}S) \|
    & = 
     \min\limits_{\substack{p \in \mathbb{P}_{k-1}\\ p(0) = 1}} 
    \max_{i = 1,..., N} |p(\lambda_i)|
    \nonumber
    \\
    &\le
    \min\limits_{\substack{p \in \mathbb{P}_{k-2}\\ p(0) = 1}} 
    \max_{i = 2, ..., N} |\frac{(\lambda_i - \lambda_1)}{\lambda_1} p(\lambda_i) | \nonumber
    \\
    &\le \frac{d+\mu \frac{d_{11}}{|K_1|}-\lambda_1}{\lambda_1}
    \min\limits_{\substack{p \in \mathbb{P}_{k-2}\\ p(0) = 1}} 
    \max_{\beta^2-\mu \frac{d_{11}}{|K_1|} \le \lambda \le d + \mu \frac{d_{11}}{|K_1|}} | p(\lambda) | \nonumber
    \\
    &\le 
    2 \frac{d+\mu \frac{d_{11}}{|K_1|}-\lambda_1}{\lambda_1} \left (\frac{\sqrt{d+\mu \frac{d_{11}}{|K_1|}} - \sqrt{\beta^2 - \mu \frac{d_{11}}{|K_1|}}}{\sqrt{d+\mu \frac{d_{11}}{|K_1|}} + \sqrt{\beta^2 - \mu \frac{d_{11}}{|K_1|}}}\right )^{k-2} .
    \notag
\end{align}
Combining this with (\ref{GMRES-residual-3}) and using the lower bound for $\lambda_1$, we obtain
\begin{align}
    \frac{\| \V{r}_k\|}{\| \V{r}_0\|}
    & \le 2\left (1+d+\mu \frac{d_{11}}{|K_1|} + \Big (\frac{d\, \lambda_{\max} (M_p^{\circ})}{\lambda_{\min} (A)}\Big )^\frac{1}{2}\right ) 
    \cdot  \frac{d+\mu \frac{d_{11}}{|K_1|}-\lambda_1}{\lambda_1} 
    \notag \\
    & \qquad \qquad \qquad \cdot \left (\frac{\sqrt{d+\mu \frac{d_{11}}{|K_1|}} - \sqrt{\beta^2 - \mu \frac{d_{11}}{|K_1|}}}{\sqrt{d+\mu \frac{d_{11}}{|K_1|}} + \sqrt{\beta^2 - \mu \frac{d_{11}}{|K_1|}}}\right )^{k-2} 
    \notag
    \\
    & \le 2\left  (1+d+\mu \frac{d_{11}}{|K_1|} + \Big (\frac{d\, \lambda_{\max} (M_p^{\circ})}{\lambda_{\min} (A)}\Big )^\frac{1}{2}\right ) 
    \cdot  \frac{d+\mu \frac{d_{11}}{|K_1|}}{\mu \frac{d_{11}}{|\Omega|}}
    \notag \\
    & \qquad \qquad \qquad \cdot \left (\frac{\sqrt{d+\mu \frac{d_{11}}{|K_1|}} - \sqrt{\beta^2 - \mu \frac{d_{11}}{|K_1|}}}{\sqrt{d+\mu \frac{d_{11}}{|K_1|}} + \sqrt{\beta^2 - \mu \frac{d_{11}}{|K_1|}}}\right )^{k-2} .
    %\label{GMRES-residual-4}
    \notag
\end{align}
If $\mu \frac{d_{11}}{|K_1|} \ll \beta^2$ is satisfied,
% \label{cond-mu}
the above estimate can be simplified into (\ref{GMRES-residual-5}).
\end{proof}

This estimate indicates that the asymptotic convergence factor of GMRES applied to the regularized system (\ref{regularized_scheme})
with the preconditioner (\ref{PrecondP}) is almost independent of $h$ and $\mu$.
Moreover, recalling that $\lambda_{\max} (M_p^{\circ})/\lambda_{\min} (A)$ is bounded above by a constant for a quasi-uniform mesh,
from  (\ref{GMRES-residual-5}) we see that the constant in the above asymptotic error bound depends on
$\mu$ and the choice of $d_{11}$. 
Similar to the asymptotic error constant for MINRES with a block diagonal preconditioner, the number of GMRES iterations is proportional to the logarithm of the asymptotic error constant, i.e.,  $\log (\mu d_{11})$.
This finding is consistent with the analysis of Campbell et al. \cite{Campbell-1996}
that shows that if the eigenvalues of the coefficient matrix consist of a single cluster plus outliers,
then the convergence factor of GMRES is bounded by the cluster radius, while the asymptotic error constant reflects
the non-normality of the coefficient matrix and the distance of the outliers from the cluster. 

It is interesting to point out that the bounds for MINES and GMRES, (\ref{MINRES_res}) and (\ref{GMRES-residual-5}), are very similar. Particularly, they have almost the same convergence factor. These bounds also indicate that MINRES with a block diagonal
preconditioner may need twice as many iterations as GMRES with a block triangular preconditioner to reach a prescribed level of the residual. Although not directly comparable, it is known \cite[Theorem 8.2]{Elman-2014} that
GMRES with a block triangular preconditioner requires half as many iterations as when a block diagonal preconditioner is used.

%%%%%%%%%%%%%%%%%%%%%%%%%%%%%%%%%%%%%%%%%%%%%%%%%%%%%%%%%%%%%%%
\section{Numerical experiments}
\label{SEC:numerical}

In this section we present some two- and three-dimensional numerical results to demonstrate the performance of MINRES 
with the block diagonal \eqref{PrecondPd} and GMRES with block triangular Schur
complement preconditioner (\ref{PrecondP}) for the regularized system (\ref{scheme_matrix_2}).
We use MATLAB's function {\em minres} with $tol = 10^{-9}$ for 2D examples and  $tol = 10^{-8}$ for 3D examples with block diagonal preconditioners,  a maximum of 1000 iterations, and the zero vector as the initial guess.
For preconditioned systems with block triangular preconditioners, we use MATLAB's function {\em gmres} with $tol = 10^{-9}$ for 2D examples and  $tol = 10^{-8}$ for 3D examples with block triangular preconditioners, $restart = 30$, and the zero vector as the initial guess.
The implementation of block preconditioners requires the action of the inversion of the diagonal blocks.
The $(2,2)$-block is the mass matrix $M_{p}^{\circ}$ which is diagonal and its inversion is trivial.
The leading block $A$ is the WG approximation of the Laplacian operator. The conjugate gradient method preconditioned
with incomplete Cholesky decomposition is used for solving linear systems associated with $A$.
The incomplete Cholesky decomposition is carried out using MATLAB's function {\em ichol} with threshold
dropping and the drop tolerance is $10^{-3}$.
Triangular and tetrahedral meshes as shown in Fig.~\ref{Mesh} are used for the computation in two and three dimensions.

\begin{figure}
    \centering
  \subfigure[A triangular mesh] {\includegraphics[width=0.3\linewidth]{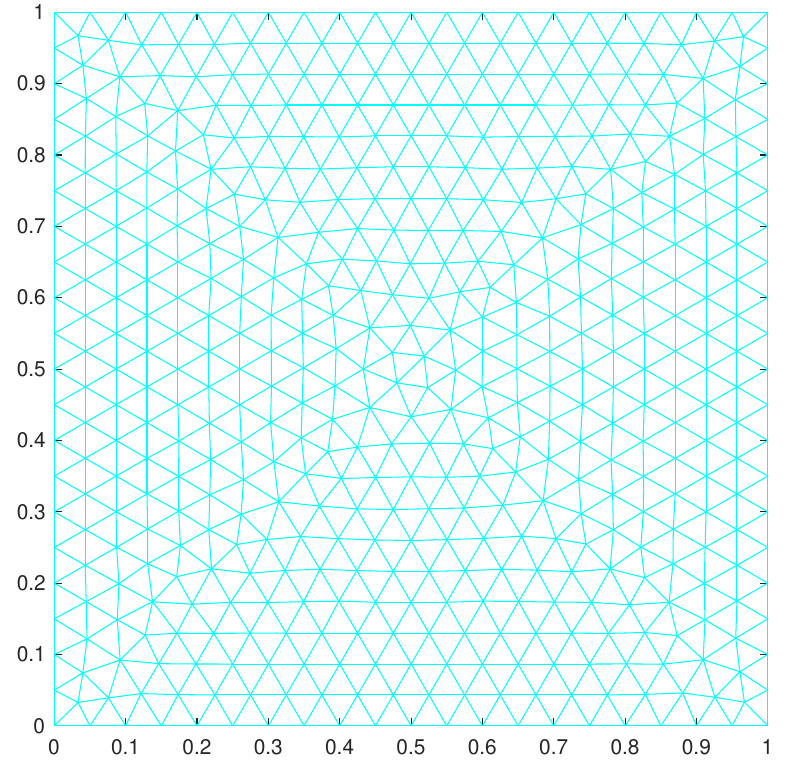}}
  \subfigure[A tetrahedral mesh] {\includegraphics[width=0.3\linewidth]{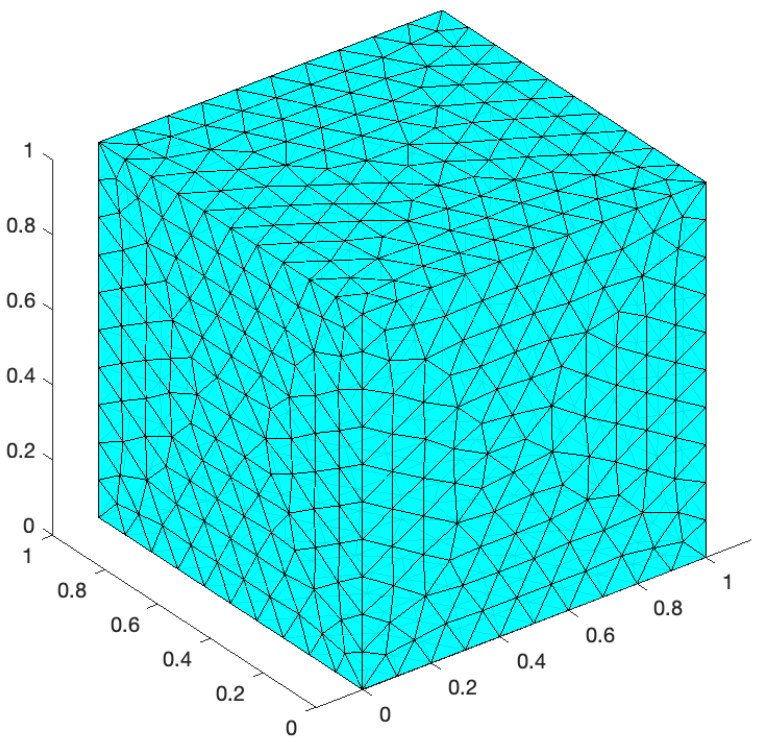}}
    \caption{Examples of meshes used for the computation in two and three dimensions.}
    \label{Mesh}
\end{figure}

\subsection{The two-dimensional example}
This two-dimensional (2D) example is taken from \cite{Mu.2020}, where
$\Omega = (0,1)^2$, the exact solutions are 
\begin{align*}
\V{u} = 
\begin{bmatrix}
    -e^x(y \cos(y)+\sin(y)) \\
    e^x y \sin(y)
\end{bmatrix}, \quad
p = 2 e^x \sin(y),
\end{align*}
and the right-hand side function is
\begin{align*}
\V{f} = 
\begin{bmatrix}
    2(1-\mu) e^x  \sin(y) \\
    2(1-\mu) e^x \cos(y)
\end{bmatrix}.
\end{align*}
We test the performance of the preconditioner with two values of viscosity, $\mu = 1$ and $10^{-4}$.

\subsubsection{Results for $\mathcal{P}_d^{-1} \mathcal{A}$}

The regularization in (\ref{regularized_scheme}) maintains the same solution as the original scheme
and does not affect the optimal convergence of the WG approximation. Since the corresponding numerical results
have been presented in \cite{WANG202290}, we focus here on the performance of the preconditioner $\mathcal{P}_d$ \eqref{PrecondPd}
for the preconditioned system $\mathcal{P}_d^{-1} \mathcal{A}$.
We examine the two choices $d_{11} = 1$ and $|K_1|$ that have been discussed in Section~\ref{sec::regularization-diagonal}.

Table~\ref{Pdiag_2D} shows the number of MINRES iterations to reach the required tolerance.
The number of iterations remains relatively small and oscillates slightly but has no significant change
when the mesh is refined and $\mu$ changes from $1$ to $10^{-4}$. 
Moreover, the choices $d_{11} = 1$ and $|K_1|$ seem to lead to almost the same performance.
MINRES with the preconditioner (\ref{PrecondPd}) seems to perform better for this example than what
indicated by the worst-case-scenario estimate (\ref{MINRES_res}) where the asymptotic constant is proportional to $1/\mu$. 
It is worth reporting that MINRES without preconditioning would take more than 1000 iterations to reach convergence.
Comparing this with Table~\ref{Pdiag_2D}, we conclude that the preconditioner (\ref{PrecondPd}) is effective.

\begin{table}[h]
    \centering
        \caption{The 2D Example: The number of MINRES iterations required to reach convergence for preconditioned systems $\mathcal{P}_d^{-1} \mathcal{A}$, with $d_{11} = 1$ and $|K_1|$ and $\mu = 1$ and $10^{-4}$.}
    \begin{tabular}{|c|c|c|c|c|c|c|}
        \hline
      $d_{11}$ & \diagbox{$\mu$}{$N$} & 232 & 918 & 3680 & 14728 & 58608 \\ \hline
       \multirow{ 2}{*}{1} & $1$ & 68 & 76 & 86 & 100 & 121 \\ \cline{2-7}
        & $10^{-4}$ & 66  & 75 & 81 & 90 & 102 \\ \hline 
       \multirow{ 2}{*}{$|K_1|$} & $1$ & 63 & 71 & 77 & 83 & 91\\ \cline{2-7}
        & $10^{-4}$  & 76 & 87 & 94& 110 & 116\\ \hline 
    \end{tabular}
    \label{Pdiag_2D}
\end{table}

\subsubsection{Results for $\mathcal{P}_t^{-1} \mathcal{A}$}

Table~\ref{P2step-2d} shows the number of GMRES iterations to reach the required tolerance.
Similar to the results for MINRES, the number of iterations remains relatively small,
with slight oscillations but no significant change, as the mesh is refined and $\mu$ varies from $1$ to $10^{-4}$.  
Moreover, the choices $d_{11} = 1$ and $|K_1|$ seem to lead to almost the same performance.
GMRES without preconditioning would take more than 30,000 iterations to reach convergence.
This demonstrates that the preconditioner (\ref{PrecondP}) is effective.

Comparing Tables~\ref{Pdiag_2D} and \ref{P2step-2d}, one can see that the number of MINRES iterations
is approximately double that of GMRES. This is consistent with the theoretical analysis
in Sections~\ref{sec::regularization-diagonal} and \ref{sec::regularization-triagle}.

\begin{table}[h]
    \centering
        \caption{The 2D Example: The number of GMRES iterations required to reach convergence for preconditioned systems $\mathcal{P}_t^{-1} \mathcal{A}$, with $d_{11} = 1$ and $|K_1|$ and $\mu = 1$ and $10^{-4}$.}
    \begin{tabular}{|c|c|c|c|c|c|c|}
        \hline
      $d_{11}$ & \diagbox{$\mu$}{$N$} & 232 & 918 & 3680 & 14728 & 58608 \\ \hline
       \multirow{ 2}{*}{1} & $1$ & 30 & 36 & 39 & 55 & 61 \\ \cline{2-7}
        & $10^{-4}$ & 33  & 38 & 53 & 58 & 66 \\ \hline 
       \multirow{ 2}{*}{$|K_1|$} & $1$ & 30 & 36 & 38 & 56 & 59\\ \cline{2-7}
        & $10^{-4}$  & 55 & 54 & 51& 52 & 56\\ \hline 
    \end{tabular}
    \label{P2step-2d}
\end{table}

\subsection{The three-dimensional example}
This three-dimensional (3D) example is adopted from \textit{deal.II} \cite{dealii} \texttt{step-56}
where $\Omega = (0,1)^3$, the exact solutions are 
\begin{align*}
\V{u} = 
\begin{bmatrix}
    2 \sin(\pi x) \\
    -\pi y \cos(\pi x) \\
    -\pi z \cos(\pi x)
\end{bmatrix}, \quad
p = \sin(\pi x) \cos(\pi y) \sin(\pi z),
\end{align*}
and the right-hand side function is
\begin{align*}
\V{f} = 
\begin{bmatrix}
    2 \mu \pi^2 \sin(\pi x) + \pi \cos(\pi x) \cos(\pi y) \sin(\pi z) \\
    -\mu \pi^3 y \cos(\pi x) - \pi \sin(\pi y) \sin(\pi x) \sin(\pi z)\\
    -\mu \pi^3 z \cos(\pi x) + \pi \sin(\pi x) \cos(\pi y) \cos(\pi z)
\end{bmatrix}.
\end{align*}

Tables~\ref{Pdiag_3d} and \ref{P2-3d-cv} show the number of MINRES iterations for the preconditioned system $\mathcal{P}_d^{-1} \mathcal{A}$ and the number of GMRES iterations for the preconditioned system $\mathcal{P}_t^{-1} \mathcal{A}$,
respectively, for $\mu = 1$ and $10^{-4}$.
The numbers remain relatively small but oscillate slightly over different mesh size and
different values of $\mu$. The oscillations may be a reflection on the fact that the asymptotic error
constant depends weakly on $h$ and $\mu$ for both cases.

\begin{table}[h]
    \centering
        \caption{The 3D Example: The number of MINRES iterations required to reach convergence for preconditioned systems $\mathcal{P}_d^{-1} \mathcal{A}$ with $d_{11} = 1$ and $|K_1|$ and $\mu = 1$ and $10^{-4}$.}
    \begin{tabular}{|c|c|c|c|c|c|c|}
        \hline
      $d_{11}$ & \diagbox{$\mu$}{$N$} & 4046 & 7915 & 32724 & 112078 & 266555\\ \hline
       \multirow{ 2}{*}{1} & $1$ & 110 &118 & 83 & 91 & 94 \\ \cline{2-7}
        & $10^{-4}$ &149  & 100& 105 & 123 &139 \\ \hline 
       \multirow{ 2}{*}{$|K_1|$} & $1$ &62 &96 &64 & 66 & 68\\ \cline{2-7}
        & $10^{-4}$  &62 & 62& 70& 76& 78\\ \hline 
    \end{tabular}
    \label{Pdiag_3d}
\end{table}

% mu = 1, d11 = 1
% 925 4046 2.220488e-02 2.277771e-01 3.278997e-02 110
% 1700 7915 1.366612e-02 1.809083e-01 2.093159e-02 118
% 6394 32724 5.264563e-03 1.136927e-01 8.268535e-03 83
% 20822 112078 2.278002e-03 7.529194e-02 3.635971e-03 91
% 48260 266555 1.277547e-03 5.640138e-02 2.049413e-03 94

% mu = 1e-4, d11 = 1
% 48260 266555 1.277457e-03 5.640137e-02 2.049354e-03 149
% 925 4046 2.220535e-02 2.277771e-01 3.279014e-02 100
% 1700 7915 1.366607e-02 1.809083e-01 2.093156e-02 105
% 6394 32724 5.263481e-03 1.136926e-01 8.267846e-03 123
% 20822 112078 2.277858e-03 7.529194e-02 3.635888e-03 139

% mu = 1e-4, d11 = K1
% 925 4046 2.220503e-02 2.277771e-01 3.278989e-02 62
% 1700 7915 1.366537e-02 1.809082e-01 2.093119e-02 62
% 6394 32724 5.263123e-03 1.136926e-01 8.267668e-03 70
% 20822 112078 2.277816e-03 7.529194e-02 3.635860e-03 76
% 48260 266555 1.277703e-03 5.640138e-02 2.049504e-03 78

% mu = 1, d11 = K1
% 925 4046 2.220532e-02 2.277772e-01 3.279037e-02 62
% 1700 7915 1.366610e-02 1.809083e-01 2.093158e-02 96
% 6394 32724 5.263736e-03 1.136926e-01 8.268003e-03 64
% 20822 112078 2.278028e-03 7.529195e-02 3.635987e-03 66
% 48260 266555 1.277523e-03 5.640138e-02 2.049396e-03 68

\begin{table}[h]
    \centering
        \caption{The 3D Example: The number of GMRES iterations required to reach convergence for preconditioned systems $\mathcal{P}_t^{-1} \mathcal{A}$ with $d_{11} = 1$ and $|K_1|$ and $\mu = 1$ and $10^{-4}$.}
    \begin{tabular}{|c|c|c|c|c|c|c|}
        \hline
      $d_{11}$ & \diagbox{$\mu$}{$N$} & 4046 & 7915 & 32724 & 112078 & 266555\\ \hline
       \multirow{ 2}{*}{1} & $1$ & 59 &64 & 57 & 57 & 61 \\ \cline{2-7}
        & $10^{-4}$ &58  & 58& 63 & 63 &67 \\ \hline 
       \multirow{ 2}{*}{$|K_1|$} & $1$ &55 &61 &56 & 58 & 61\\ \cline{2-7}
        & $10^{-4}$  &35 & 35& 37& 38& 38\\ \hline 
    \end{tabular}
    \label{P2-3d-cv}
\end{table}

%mu = 1, d11 = 1, lower, 1e-8
% 925 4046 2.220488e-02 2.277771e-01 3.278997e-02 59
% 1700 7915 1.366614e-02 1.809083e-01 2.093160e-02 64
% 6394 32724 5.263264e-03 1.136926e-01 8.267721e-03 57
% 20822 112078 2.277923e-03 7.529194e-02 3.635926e-03 57
% 48260 266555 1.277467e-03 5.640137e-02 2.049361e-03 61

%mu = 1, d11 = K, lower, 1e-8
% 925 4046 2.220488e-02 2.277771e-01 3.278997e-02 55
% 1700 7915 1.366614e-02 1.809083e-01 2.093160e-02 61
% 6394 32724 5.263265e-03 1.136926e-01 8.267722e-03 56
% 20822 112078 2.277922e-03 7.529194e-02 3.635925e-03 58
% 48260 266555 1.277466e-03 5.640137e-02 2.049360e-03 61

%mu = 1e-4, d11 = 1, lower, 1e-8
% 925 4046 2.220505e-02 2.277771e-01 3.279001e-02 58
% 1700 7915 1.366627e-02 1.809083e-01 2.093165e-02 58
% 6394 32724 5.262751e-03 1.136926e-01 8.267426e-03 63
% 20822 112078 2.278067e-03 7.529195e-02 3.636011e-03 63
% 48260 266555 1.277450e-03 5.640137e-02 2.049352e-03 67

%mu = 1e-4, d11 = K, lower, 1e-8
% 925 4046 2.220492e-02 2.277771e-01 3.278983e-02 35
% 1700 7915 1.366540e-02 1.809082e-01 2.093119e-02 35
% 6394 32724 5.263444e-03 1.136926e-01 8.267855e-03 37
% 20822 112078 2.277595e-03 7.529193e-02 3.635734e-03 38
% 48260 266555 1.277864e-03 5.640138e-02 2.049600e-03 38

%%%%%%%%%%%%%%%%%%%%%%%%%%%%%%%%%%%%%%%%%%%%%%%%%%%%%%%%%%%%%%%
%%%%%%%%%%%%%%%%%%%%%%%%%%%%%%%%%%%%%%%%%%%%%%%%%%%%%%%%%%%%%%%
\section{Conclusions}
\label{SEC:conclusions}

In the previous sections we have studied the convergence of the MINRES and GMRES iterative solution of
the lowest-order weak Galerkin finite element approximation of Stokes problems.
The resulting saddle point system \eqref{scheme_matrix} is singular,
with one-rank deficiency in the (1,2) (and (2,1)) block of the coefficient matrix.
To address the singularity of the system, we have applied a commonly used local technique by specifying the value zero of the pressure at the barycenter of the first element.
In Section~\ref{SEC:formulation} we have analytically proved the nonsingularity of the regularized system (\ref{scheme_matrix_2}) and obtained the bounds for the Schur complement.

We have considered block diagonal and triangular Schur complement preconditioners
for the iterative solution of the regularized system.
In Section~\ref{sec::regularization-diagonal}, we have studied the block diagonal Schur complement preconditioner
\eqref{PrecondPd} and established bounds for the eigenvalues of the preconditioned system (see Lemma~\ref{lem:eigen_bound_diag})
and for the residual of MINRES applied to the preconditioned system (cf. Proposition~\ref{MINRES_conv}). 
These bounds show that the convergence factor of MINRES is nearly independent of $\mu$ and $h$
while the number of iterations required to reach convergence depends logarithmically on these parameters.

In Section~\ref{sec::regularization-triagle}, we have studied the block triangular Schur complement preconditioner
(\ref{PrecondP}). For this case, the preconditioned system is non longer diagonalizable. As a consequence, we need to use
GMRES for the iterative solution of the preconditioned system. Moreover, the spectral analysis is insufficient to determine
the convergence of GMRES. Lemmas~\ref{lem:gmres-conv-lower} and \ref{lem:gmres-conv-upper}
developed in Appendix~\ref{appendix:gmres convergence} have been used to analyze the convergence of GMRES.
More specifically, for the preconditioner (\ref{PrecondP}) this has been done through estimating
$\| A^{-1} (B^{\circ})^T \|$, $\| \hat{S}^{-1}S\|$, and $\min_{p} \| p(\hat{S}^{-1}S) \|$.
The bounds for $\| A^{-1} (B^{\circ})^T \|$ and $\| \hat{S}^{-1}S\|$ are given in (\ref{bound2}) and Lemma~\ref{lem:S-2}.
The bounds for the eigenvalues of $\hat{S}^{-1}S$ are given in Lemma~\ref{eigen_bound} and those for the residual of
GMRES applied to the preconditioned system are given in Proposition~\ref{GMRES_conv}. 
These bounds show that, like MINRES, the convergence factor of GMRES is almost independent of $\mu$ and $h$ but
the number of GMRES iterations required to reach a prescribed level of residual depends on the parameters logarithmically.

The numerical results in two and three dimensions presented in Section~\ref{SEC:numerical} have confirmed that the block diagonal \eqref{PrecondPd} and block triangular (\ref{PrecondP}) Schur complement preconditioners are effective for the regularized system (\ref{scheme_matrix_2}). These preconditioners use the exact leading diagonal block $A$ and an inexact approximation to
the Schur complement. The action of the inversion of $A$ can be carried out efficiently using a direct sparse solver
or an iterative solver with preconditioning. Moreover, an inexact approximation of $A$ can be used for
the preconditioners in practical computation. 

% In the Appendix, we presented the study on the convergence analysis of iterative solutions for general saddle point problems preconditioned with block diagonal and block triangular Schur complement preconditioners.
% Especially, Lemmas~\ref{lem:gmres-conv-lower} and \ref{lem:gmres-conv-upper}
% provide estimates for the GMRES convergence, stating that the GMRES residual for the preconditioned system is
% primarily determined by $\hat{S}^{-1}S$ or $S\hat{S}^{-1}$, where $\hat{S}$ is an approximation of the Schur complement $S$.
% This theoretical result is a useful tool for use in analyzing the convergence of GMRES for 
% nondiagonalizable systems resulting from block triangular Schur complement preconditioners.
% It is worth remarking that the choice $\hat{A} = A$ is only for the purpose of theoretical analysis.
% An inexact $\hat{A}$ can be used in the actual computation. Moreover, exact inversion of $A$ can be carried
% out efficiently using a direct sparse solver or an iterative solver with preconditioning.

%%%%%%%%%%%%%
\section*{Acknowledgments}

W.~Huang was supported in part by the Air Force Office of Scientific Research (AFOSR) grant FA9550-23-1-0571
and the Simons Foundation grant MPS-TSM-00002397.

\appendix

\section{Convergence analysis of GMRES for saddle point problems with block triangular Schur complement preconditioning}
\label{appendix:gmres convergence}

Consider saddle point systems in the general form
\begin{equation}
\label{SPP-1}
\begin{bmatrix} A & B^T \\ C & - D\end{bmatrix} \begin{bmatrix} \V{u} \\ \V{p} \end{bmatrix} =
\begin{bmatrix} \V{b}_1 \\ \V{b}_2 \end{bmatrix},
\qquad \mathcal{A} = \begin{bmatrix} A & B^T \\ C & - D\end{bmatrix} ,
\end{equation}
where $A$ is assumed to be nonsingular. The system is not assumed to be symmetric here, neither does $B$ or $C$
have full rank nor is the Schur complement $- D - C A^{-1} B^T$ nonsingular.
For notational convenience and without causing confusion,
we denote $S = D  + CA^{-1}B^T$ and refer it as the Schur complement instead. 

A commonly used strategy for iterative solutions is to employ a Krylov subspace method with a preconditioner.
Two of widely used preconditioners are (inexact) block diagonal and block triangular Schur
complement preconditioners,
\begin{align}
\label{block-schur-precond}
    \mathcal{P}_{d^{\pm}} = \begin{bmatrix}
        \hat{A} & 0 \\ 0 & \pm \hat{S}
    \end{bmatrix},\quad 
    \mathcal{P}_{tL^{\pm}} = \begin{bmatrix}
        \hat{A} & 0 \\ C & \pm \hat{S}
    \end{bmatrix},\quad 
    \mathcal{P}_{tU^{\pm}} = \begin{bmatrix}
        \hat{A} & B^T \\ 0 & \pm \hat{S}
    \end{bmatrix},
\end{align}
where $\hat{A}$ and $\hat{S}$ are approximations to $A$ and $S$, respectively.
Generally speaking, the preconditioned systems associated with these preconditioners
are not diagonalizable.
The exceptions include $\mathcal{P}_{d^{\pm}}^{-1} \mathcal{A}$,
$\mathcal{P}_{tL^{+}}^{-1} \mathcal{A}$, and $\mathcal{P}_{tU^{+}}^{-1} \mathcal{A}$ with exact $\hat{A} = A$ 
and $\hat{S} = S$ (e.g., see \cite{MurphyGolubWathen_SISC_2000}) and
$\mathcal{P}_{d^{+}}^{-1} \mathcal{A}$ for general $\hat{A}$ and $\hat{S}$ assuming that
$\mathcal{A}$ is symmetric and $\mathcal{P}_{d^{+}}$ is symmetric and positive definite (SPD).
For the latter case, the minimal residual method (MINRES) can be applied and the convergence analysis can be carried
out using spectral analysis \cite{Elman-2014}.
For other situations, particularly for $\mathcal{P}_{tL^{\pm}}$ and $\mathcal{P}_{tU^{\pm}}$
with inexact $\hat{A}$ and/or inexact $\hat{S}$, the preconditioned systems are not diagonalizable.
For these systems, the convergence of GMRES (or any other suitable Krylov subspace method) is
difficult to analyze in general since the spectral information is insufficient in determining the convergence behavior.
As a matter of fact, limited analysis work has been done with block triangular preconditioners
$\mathcal{P}_{tL^{\pm}}$ and $\mathcal{P}_{tU^{\pm}}$.
For symmetric saddle point systems,
Bramble and Pasciak \cite{Bramble_MathComp_1988} considered the lower block triangular preconditioner $\mathcal{P}_{tL^{-1}}$
and showed that $\mathcal{P}_{tL^{-1}}^{-1} \mathcal{A}$ is SPD in the inner product associated with
the matrix $\text{diag}(A-\hat{A}, \hat{S})$ (which is assumed to be SPD), the corresponding preconditioned
system can be solved using the conjugate gradient method,  and the convergence can be analyzed accordingly.

We consider the block triangular preconditioners $\mathcal{P}_{tL^{\pm}}$ and $\mathcal{P}_{tU^{\pm}}$
with exact $\hat{A} = A$ but inexact Schur complement $\hat{S}$. For convenience, we denote these inexact block
triangular Schur complement preconditioners as
\begin{align}
\label{SPP-4}
    \mathcal{P}_{L^{\pm}} = \begin{bmatrix}
        A & 0 \\ C & \pm \hat{S}
    \end{bmatrix},
    \quad 
    \mathcal{P}_{U^{\pm}} = \begin{bmatrix}
    A & B^{T} \\ 0 & \pm \hat{S}
    \end{bmatrix},
\end{align}
where $\hat{S}$ is an approximation to $S$ and is assumed to be nonsingular.
The corresponding preconditioned systems read as
\begin{align}
    \label{SPP-2}
    \mathcal{P}_{L^{\pm}}^{-1} \mathcal{A} \begin{bmatrix} \V{u} \\ \V{p} \end{bmatrix}
    = \mathcal{P}_{L^{\pm}}^{-1} \begin{bmatrix} \V{b}_1 \\ \V{b}_2 \end{bmatrix}, \qquad 
    \mathcal{A} \mathcal{P}_{U^{\pm}}^{-1} \left ( \mathcal{P}_{U^{\pm}} \begin{bmatrix} \V{u} \\ \V{p} \end{bmatrix} \right ) 
    = \begin{bmatrix} \V{b}_1 \\ \V{b}_2 \end{bmatrix} .
\end{align}
It can be verified that
\begin{align}
    \label{SPP-3}
    \mathcal{P}_{L^{\pm}}^{-1} \mathcal{A} = \begin{bmatrix} I & A^{-1} B^T \\ 0 & \mp \hat{S}^{-1} S\end{bmatrix},
    \qquad  \mathcal{A} \mathcal{P}_{U^{\pm}}^{-1} = \begin{bmatrix} I & 0 \\ CA^{-1} & \mp  S \hat{S}^{-1}\end{bmatrix} .
\end{align}
For many applications, $\mathcal{P}_{L^{+}}^{-1} \mathcal{A}$ and
$\mathcal{P}_{U^{+}}^{-1} \mathcal{A}$ have eigenvalues on both sides of the imaginary axis while
$\mathcal{P}_{L^{-}}^{-1} \mathcal{A}$ and $\mathcal{P}_{U^{-}}^{-1} \mathcal{A}$ have eigenvalues only
on the right side of the imaginary axis. Moreover, when $\hat{S} \neq S$, these matrices are not diagonalizable in general.
Despite this, the following two lemmas provide an estimate on the residual of GMRES in terms of
$\hat{S}^{-1} S$ or $ S \hat{S}^{-1}$.

\begin{lem}
\label{lem:gmres-conv-lower}
    The residual of GMRES applied to the preconditioned system $\mathcal{P}_{L^{\pm}}^{-1} \mathcal{A}$ is bounded by
    \begin{align}
        \label{lem:lowerP_res-1}
        \frac{\| \V{r}_k \|}{\|\V{r}_0\|} \le
        (1+\|A^{-1}B^T\| + \| \hat{S}^{-1} S \|) \min\limits_{\substack{p \in \mathbb{P}_{k-1}\\ p(0) = 1}} \| p(\hat{S}^{-1} S) \| ,
        \end{align}
where $\mathbb{P}_{k-1}$ denotes the set of polynomials of degree up to $k-1$.
\end{lem}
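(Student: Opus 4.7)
The plan is to exploit the block upper-triangular structure of $\mathcal{P}_{L^{\pm}}^{-1}\mathcal{A}$ recorded in \eqref{SPP-3} together with the standard GMRES optimality, namely $\|\V{r}_k\| \le \min_{P \in \mathbb{P}_k,\, P(0)=1} \|P(M)\V{r}_0\|$, where $M = \mathcal{P}_{L^{\pm}}^{-1}\mathcal{A}$. The strategy is to construct one concrete admissible polynomial $P$ of degree at most $k$ whose image $P(M)$ has an explicit block form that can be bounded by $\|p(\hat{S}^{-1}S)\|$ times the prefactor in \eqref{lem:lowerP_res-1}.

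To that end, set $X = A^{-1}B^T$ and $Y = \mp \hat{S}^{-1}S$, so that $M = \left[\begin{smallmatrix} I & X \\ 0 & Y \end{smallmatrix}\right]$. A short induction shows that for $n \ge 1$
\[ M^n = \begin{bmatrix} I & X \sum_{j=0}^{n-1} Y^j \\ 0 & Y^n \end{bmatrix}, \]
so any polynomial $q$ evaluated at $M$ has the form $q(M) = \left[\begin{smallmatrix} q(1)\,I & \ast \\ 0 & q(Y) \end{smallmatrix}\right]$. The critical observation is that the $q(1)\,I$ block cannot be controlled by $\|q(\hat{S}^{-1}S)\|$ alone. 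This forces the ansatz $P(t) = (1-t)\,p(t)$ with $p \in \mathbb{P}_{k-1}$ and $p(0) = 1$; then $P$ is admissible, and factoring $P(M) = (I-M)\,p(M)$ gives the clean block form
\[ P(M) = \begin{bmatrix} 0 & -X\,p(Y) \\ 0 & (I-Y)\,p(Y) \end{bmatrix}. \]

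From here only routine estimates remain. Splitting $\V{r}_0 = (\V{v}^T,\V{w}^T)^T$, one sums the two block norms and uses $\sqrt{a^2+b^2}\le a+b$ together with $\|I-Y\|\le 1+\|Y\|$ to obtain $\|P(M)\V{r}_0\| \le (1+\|X\|+\|Y\|)\,\|p(Y)\|\,\|\V{r}_0\|$. Taking the minimum over admissible $p$ and using the substitution $p(t)\mapsto p(\mp t)$ (which preserves both the degree constraint and the normalization $p(0)=1$) to convert $\|p(\mp \hat{S}^{-1}S)\|$ into $\|p(\hat{S}^{-1}S)\|$ then yields \eqref{lem:lowerP_res-1}.

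The main obstacle is recognizing the necessity of the $(1-t)$ factor in the ansatz: without it, the identity block in $M$ leaves a persistent $p(1)\,I$ contribution in the (1,1) block of $P(M)$ that is unrelated to the spectrum of $\hat{S}^{-1}S$ and cannot be suppressed by any choice of $p$ normalized at the origin. Once this ansatz is in place, everything else is elementary matrix algebra and norm inequalities, with only a minor bookkeeping subtlety for the sign in $Y = \mp\hat{S}^{-1}S$, absorbed by the polynomial substitution just mentioned.
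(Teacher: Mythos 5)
Your proposal is correct and takes essentially the same route as the paper: GMRES optimality with the degree-$k$ ansatz $(1-z)p(z)$, whose vanishing at $z=1$ annihilates the identity block of $\mathcal{P}_{L^{\pm}}^{-1}\mathcal{A}$, leading to the same explicit block form $(I-\mathcal{P}_{L^{\pm}}^{-1}\mathcal{A})\,p(\mathcal{P}_{L^{\pm}}^{-1}\mathcal{A}) = \bigl[\begin{smallmatrix} 0 & -A^{-1}B^T p(Y) \\ 0 & (I-Y)p(Y)\end{smallmatrix}\bigr]$ and the same norm estimates. The only differences are cosmetic: you obtain this block form by a direct induction on powers of the block upper-triangular matrix rather than via Cauchy's integral formula and the resolvent as in the paper, and you make explicit the sign substitution $p(t)\mapsto p(\mp t)$ that the paper leaves implicit.
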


\begin{proof}
For the residual of GMRES \cite{GMRES-1986} for $\mathcal{P}_{L^{\pm}}^{-1} \mathcal{A}$, we have
\begin{align}
\| \V{r}_k\| = \min\limits_{\substack{p \in \mathbb{P}_{k}\\ p(0) = 1}} \| p (\mathcal{P}_{L^{\pm}}^{-1} \mathcal{A})  r_0 \|
\le \min\limits_{\substack{p \in \mathbb{P}_{k-1}\\ p(0) = 1}} \| (I - \mathcal{P}_{L^{\pm}}^{-1} \mathcal{A})
\;p (\mathcal{P}_{L^{\pm}}^{-1} \mathcal{A})\| \|  r_0 \|.
\label{lem_P1_2}
\end{align}
From Cauchy's integral formula (e.g., see \cite[Equation (9.2.8)]{MR3024913}), we have
\begin{align}
    (I - \mathcal{P}_{L^{\pm}}^{-1} \mathcal{A}) \; p (\mathcal{P}_{L^{\pm}}^{-1} \mathcal{A}) =
    \frac{1}{2\pi i} \int_{\gamma} (1-z) \; p(z) (zI - \mathcal{P}_{L^{\pm}}^{-1} \mathcal{A}) ^{-1} dz,
    \label{lem_P1_3}
\end{align}
where $i^2 = -1$ and $\gamma$ is a closed contour enclosing the spectrum of $\mathcal{P}_{t}^{-1} \mathcal{A}$ on the complex plane.
From \eqref{SPP-3}, we have
\begin{align*}
(zI - \mathcal{P}_{L^{\pm}}^{-1} \mathcal{A}) ^{-1} =
    \begin{bmatrix}
        (z-1)^{-1} I & (z-1)^{-1} A^{-1} B^T (zI \pm \hat{S}^{-1}S)^{-1} \\
        0 & (zI \pm \hat{S}^{-1}S)^{-1}
    \end{bmatrix}.
\end{align*}
Inserting this into (\ref{lem_P1_3}) gives
\begin{align*}
    (I - \mathcal{P}_{L^{\pm}}^{-1} \mathcal{A}) \; p (\mathcal{P}_{L^{\pm}}^{-1} \mathcal{A}) =
    \begin{bmatrix}
        \frac{-1}{2 \pi i} \int_{\gamma} p(z)I dz & 
        - \frac{A^{-1}B^T}{2 \pi i} \int_{\gamma} p(z) (zI \pm \hat{S^{-1}} S)^{-1} dz \\
        0 &  \frac{1}{2 \pi i} \int_{\gamma} (1-z) p(z) (zI  \pm \hat{S^{-1}} S)^{-1} dz
    \end{bmatrix}.
\end{align*}
Notice that $\int_{\gamma} p(z) dz = 0$ since $p(z)$ is analytic. Using Cauchy's integral formula again for other entries, we get
\begin{align*}
     (I - \mathcal{P}_{L^{\pm}}^{-1} \mathcal{A}) \; p (\mathcal{P}_{L^{\pm}}^{-1} \mathcal{A}) =
     \begin{bmatrix}
         0 & - A^{-1} B^T p(\mp \hat{S}^{-1}S) \\
         0 & (I \pm \hat{S}^{-1} S) p(\mp \hat{S}^{-1}S)
     \end{bmatrix}.
\end{align*}
Combining this with (\ref{lem_P1_2}), we have
\begin{align*}
    \frac{\| \V{r}_k \|}{\|\V{r}_0 \|} 
    &\le \min \limits_{\substack{p \in \mathbb{P}_{k-1}\\ p(0) = 1}} \left (\|A^{-1} B^T p(\mp \hat{S}^{-1}S)\|^2  + \|(I \pm \hat{S}^{-1} S) p(\mp \hat{S}^{-1}S) \|^2\right )^{\frac{1}{2}}
    \\
    & \le (1 + \|A^{-1} B^T\| + \|\hat{S}^{-1}S\| ) \min \limits_{\substack{p \in \mathbb{P}_{k-1}\\ p(0) = 1}} 
     \| p( \hat{S}^{-1}S )\| ,
\end{align*}
which gives (\ref{lem:lowerP_res-1}).
\end{proof}

Similarly, we can prove the following lemma.

\begin{lem}
\label{lem:gmres-conv-upper}
    The residual of GMRES applied to the preconditioned system $\mathcal{A} \mathcal{P}_{U^{\pm}}^{-1} $ is bounded by
    \begin{align}
        \frac{\| \V{r}_k \|}{\|\V{r}_0\|} \le
        (1+\|C A^{-1}\| + \|  S \hat{S}^{-1} \|) \min\limits_{\substack{p \in \mathbb{P}_{k-1}\\ p(0) = 1}} \| p(S \hat{S}^{-1}) \| .
        \end{align}
\end{lem}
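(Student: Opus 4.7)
The plan is to mirror the proof of Lemma~\ref{lem:gmres-conv-lower} almost verbatim, exploiting the symmetric role played by $\mathcal{A}\mathcal{P}_{U^{\pm}}^{-1}$ versus $\mathcal{P}_{L^{\pm}}^{-1}\mathcal{A}$. Starting from the standard GMRES residual minimization,
\begin{align*}
\|\V{r}_k\|
\le \min_{\substack{p\in\mathbb{P}_{k-1}\\ p(0)=1}}
\bigl\|(I-\mathcal{A}\mathcal{P}_{U^{\pm}}^{-1})\, p(\mathcal{A}\mathcal{P}_{U^{\pm}}^{-1})\bigr\|\,\|\V{r}_0\|,
\end{align*}
I would then represent $(I-\mathcal{A}\mathcal{P}_{U^{\pm}}^{-1})\,p(\mathcal{A}\mathcal{P}_{U^{\pm}}^{-1})$ via Cauchy's integral formula on a closed contour $\gamma$ enclosing the spectrum of $\mathcal{A}\mathcal{P}_{U^{\pm}}^{-1}$.

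The next step is to compute the resolvent $(zI-\mathcal{A}\mathcal{P}_{U^{\pm}}^{-1})^{-1}$ explicitly. From (\ref{SPP-3}), the matrix $\mathcal{A}\mathcal{P}_{U^{\pm}}^{-1}$ is block lower triangular, so its resolvent is also block lower triangular, namely
\begin{align*}
(zI-\mathcal{A}\mathcal{P}_{U^{\pm}}^{-1})^{-1}
=\begin{bmatrix}
(z-1)^{-1}I & 0\\
(zI\pm S\hat{S}^{-1})^{-1}\,CA^{-1}\,(z-1)^{-1} & (zI\pm S\hat{S}^{-1})^{-1}
\end{bmatrix}.
\end{align*}
Substituting this into the Cauchy integral and evaluating block by block, I expect the (1,1) and (1,2) entries to vanish (the (1,1) reduces to $\frac{1}{2\pi i}\int_\gamma p(z)\,dz=0$ by analyticity), while the remaining entries simplify using Cauchy's formula once more to give
\begin{align*}
(I-\mathcal{A}\mathcal{P}_{U^{\pm}}^{-1})\,p(\mathcal{A}\mathcal{P}_{U^{\pm}}^{-1})
=\begin{bmatrix}
0 & 0\\
-p(\mp S\hat{S}^{-1})\,CA^{-1} & (I\pm S\hat{S}^{-1})\,p(\mp S\hat{S}^{-1})
\end{bmatrix}.
\end{align*}

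Taking norms and applying $\sqrt{a^2+b^2}\le a+b$ together with submultiplicativity, I would arrive at
\begin{align*}
\frac{\|\V{r}_k\|}{\|\V{r}_0\|}
\le (1+\|CA^{-1}\|+\|S\hat{S}^{-1}\|)
\min_{\substack{p\in\mathbb{P}_{k-1}\\ p(0)=1}}\|p(\mp S\hat{S}^{-1})\|.
\end{align*}
The final cosmetic step is to observe that substituting $p(z)\mapsto p(-z)$ preserves the normalization $p(0)=1$, so the minimization value with argument $\mp S\hat{S}^{-1}$ coincides with that for $S\hat{S}^{-1}$, yielding the stated bound.

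The main obstacle is purely computational: correctly inverting the block lower triangular shifted matrix and tracking signs when applying Cauchy's formula to the $(2,1)$ block (where a factor $(1-z)(z-1)^{-1}=-1$ appears). Once that bookkeeping is done, the argument is a direct transpose-analog of the proof of Lemma~\ref{lem:gmres-conv-lower} and requires no new ideas.
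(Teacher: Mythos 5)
Your proposal is correct and is exactly the route the paper intends: the paper proves Lemma~\ref{lem:gmres-conv-upper} by remarking that it follows "similarly" to Lemma~\ref{lem:gmres-conv-lower}, and your argument is precisely that mirror image, using the block lower triangular form of $\mathcal{A}\mathcal{P}_{U^{\pm}}^{-1}$ from (\ref{SPP-3}), the Cauchy integral representation, and the vanishing of $\int_\gamma p(z)\,dz$. Your explicit handling of the sign via $p(z)\mapsto p(-z)$ is a small point the paper glosses over, but it is correct and needed for the stated form of the bound.
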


From the above two lemmas, we can make the following observations.
\begin{itemize}
\item The lemmas show that, to estimate the residual of GMRES for the preconditioned saddle point systems (\ref{SPP-2}),
    basically we just need to estimate
    \[
    \min\limits_{\substack{p \in \mathbb{P}_{k-1}\\ p(0) = 1}} \| p(\hat{S}^{-1} S) \| \quad \text{ or }\quad 
    \min\limits_{\substack{p \in \mathbb{P}_{k-1}\\ p(0) = 1}} \| p(S \hat{S}^{-1}) \|,
    \]
    which reflects the performance of GMRES for the smaller system $\hat{S}^{-1} S$ or $S \hat{S}^{-1}$.
    Analyzing the latter is a much easier
    task for many applications. For example, for symmetric saddle point problems, $\hat{S}^{-1} S$ or $S \hat{S}^{-1}$
    is often diagonalizable. In this case, spectral analysis is sufficient.
\item The convergence factor of GMRES for the preconditioned saddle point problems in (\ref{SPP-2}) is determined
    by the convergence factor of GMRES applied to smaller systems associated with $\hat{S}^{-1} S$ or $S \hat{S}^{-1}$.
\item The asymptotic error constant of GMRES for the preconditioned saddle point problems in (\ref{SPP-2})
    depends on $\| \hat{S}^{-1} S\|$, $\|A^{-1} B^T\|$, and $\|C A^{-1}\|$, which reflects the effects
    of the departure from the normality of the preconditioned problems.
\item The four block triangular Schur complement preconditioners in (\ref{SPP-4}) lead to similar bounds
    for the residual of GMRES and are expected to perform similarly.
\end{itemize}

%---------------------------------------------------

% \bibliographystyle{abbrv}
% \bibliography{bibfile}

\begin{thebibliography}{10}

\bibitem{Ainsworth_SINUM_2022}
M.~Ainsworth and C.~Parker.
\newblock A mass conserving mixed $hp$-{FEM} scheme for {Stokes} flow. {Part}
  {III}: Implementation and preconditioning.
\newblock {\em SIAM J. Numer. Anal.}, 60:1574--1606, 2022.

\bibitem{dealii}
D.~Arndt, W.~Bangerth, B.~Blais, T.~C. Clevenger, M.~Fehling, A.~Grayver,
  T.~Heister, L.~Heltai, M.~Kronbichler, M.~Maier, P.~Munch, J.-P. Pelteret,
  R.~Rastak, I.~Tomas, B.~Turcksin, Z.~Wang, and D.~Wells.
\newblock The deal.ii library, version 9.2.
\newblock {\em J. Numer. Math.}, 28:131--146, 2020.

\bibitem{Bacuta-2019}
C.~Bacuta and J.~Jacavage.
\newblock Saddle point least squares preconditioning of mixed methods.
\newblock {\em Comput. Math. Appl.}, 77:1396--1407, 2019.

\bibitem{BenziGolubLiesen-2005}
M.~Benzi, G.~H. Golub, and J.~Liesen.
\newblock Numerical solution of saddle point problems.
\newblock {\em Acta Numerica}, 14:1--137, 2005.

\bibitem{Benzi2008}
M.~Benzi and A.~J. Wathen.
\newblock {\em Some Preconditioning Techniques for Saddle Point Problems},
  pages 195--211.
\newblock Springer Berlin Heidelberg, Berlin, Heidelberg, 2008.

\bibitem{Bevilacqua_SISC_2024}
T.~Bevilacqua, F.~Dassi, S.~Zampini, and S.~Scacchi.
\newblock {BDDC} preconditioners for virtual element approximations of the
  three-dimensional {Stokes} equations.
\newblock {\em SIAM J. Sci. Comput.}, 46:A156--A178, 2024.

\bibitem{Bramble_MathComp_1988}
J.~H. Bramble and J.~E. Pasciak.
\newblock A preconditioning technique for indefinite systems resulting from
  mixed approximations of elliptic problems.
\newblock {\em Math. Comp.}, 50:1--17, 1988.

\bibitem{Campbell-1996}
S.~L. Campbell, I.~C.~F. Ipsen, C.~T. Kelley, and C.~D. Meyer.
\newblock G{MRES} and the minimal polynomial.
\newblock {\em BIT}, 36:664--675, 1996.

\bibitem{Elman-2014}
H.~C. Elman, D.~J. Silvester, and A.~J. Wathen.
\newblock {\em Finite elements and fast iterative solvers: with applications in
  incompressible fluid dynamics}.
\newblock Numerical Mathematics and Scientific Computation. Oxford University
  Press, Oxford, second edition, 2014.

\bibitem{MR3024913}
G.~H. Golub and C.~F. Van~Loan.
\newblock {\em Matrix computations}.
\newblock Johns Hopkins Studies in the Mathematical Sciences. Johns Hopkins
  University Press, Baltimore, MD, fourth edition, 2013.

\bibitem{Greenbaum-1997}
A.~Greenbaum.
\newblock {\em Iterative methods for solving linear systems}, volume~17 of {\em
  Frontiers in Applied Mathematics}.
\newblock Society for Industrial and Applied Mathematics (SIAM), Philadelphia,
  PA, 1997.

\bibitem{GUERMOND20066011}
J.~Guermond, P.~Minev, and J.~Shen.
\newblock An overview of projection methods for incompressible flows.
\newblock {\em Comput. Methods Appl. Mech. Engrg.}, 195:6011--6045, 2006.

\bibitem{GWYNLLYW20061027}
D.~Gwynllyw and T.~Phillips.
\newblock On the enforcement of the zero mean pressure condition in the
  spectral element approximation of the {Stokes} problem.
\newblock {\em Comput. Methods Appl. Mech. Engrg.}, 195:1027--1049, 2006.

\bibitem{HuangWang_CiCP_2015}
W.~Huang and Y.~Wang.
\newblock Discrete maximum principle for the weak {G}alerkin method for
  anisotropic diffusion problems.
\newblock {\em Comm. Comput. Phys.}, 18:65–90, 2015.

\bibitem{Lederer_JSC_2024}
P.~L. Lederer and C.~Merdon.
\newblock Gradient-robust hybrid {DG} discretizations for the compressible
  {Stokes} equations.
\newblock {\em J. Sci. Comput.}, page~54, 2024.

\bibitem{Mu.2020}
L.~Mu.
\newblock Pressure robust weak {Galerkin} finite element methods for {Stokes}
  problems.
\newblock {\em SIAM J. Sci. Comput.}, 42:B608--B629, 2020.

\bibitem{MR3261511}
L.~Mu, X.~Wang, and X.~Ye.
\newblock A modified weak {G}alerkin finite element method for the {S}tokes
  equations.
\newblock {\em J. Comput. Appl. Math.}, 275:79--90, 2015.

\bibitem{MuYeZhang_SISC_2021}
L.~Mu, X.~Ye, and S.~Zhang.
\newblock A stabilizer-free, pressure-robust, and superconvergence weak
  {Galerkin} finite element method for the {Stokes} equations on polytopal
  mesh.
\newblock {\em SIAM J. Sci. Comput.}, 43:A2614--A2637, 2021.

\bibitem{MurphyGolubWathen_SISC_2000}
M.~F. Murphy, G.~H. Golub, and A.~J. Wathen.
\newblock A note on preconditioning for indefinite linear systems.
\newblock {\em SIAM J. Sci. Comput.}, 21:1969--1972, 2000.

\bibitem{MINRES-1975}
C.~C. Paige and M.~A. Saunders.
\newblock Solutions of sparse indefinite systems of linear equations.
\newblock {\em SIAM J. Numer. Anal.}, 12:617-629, 1975.

\bibitem{Rhebergen_SISC_2022}
S.~Rhebergen and G.~N. Wells.
\newblock Preconditioning for a pressure-robust {HDG} discretization of the
  {Stokes} equations.
\newblock {\em SIAM J. Sci. Comput.}, 44:A583--A604, 2022.

\bibitem{GMRES-1986}
Y.~Saad and M.~H. Schultz.
\newblock {GMRES}: A generalized minimal residual algorithm for solving
  nonsymmetric linear systems.
\newblock {\em SIAM J. Sci. Comput.}, 7:856--869, 1986.

\bibitem{TuWang_CMA_2018}
X.~Tu and B.~Wang.
\newblock A {BDDC} algorithm for the {S}tokes problem with weak {G}alerkin
  discretizations.
\newblock {\em Comput. Math. Appl.}, 76:377--392, 2018.

\bibitem{TuWangZhang_ETNA_2020}
X.~Tu, B.~Wang, and J.~Zhang.
\newblock Analysis of {BDDC} algorithms for {S}tokes problems with hybridizable
  discontinuous {G}alerkin discretizations.
\newblock {\em Electron. Trans. Numer. Anal.}, 52:553--570, 2020.

\bibitem{Wang2_CMAME_2021}
G.~Wang, L.~Mu, Y.~Wang, and Y.~He.
\newblock A pressure-robust virtual element method for the {Stokes} problem.
\newblock {\em Comput. Meth. Appl. Mech. Engrg.}, 382:113879, 2021.

\bibitem{WangYe_Adv_2016}
J.~Wang and X.~Ye.
\newblock A weak {Galerkin} finite element method for the {Stokes} equations.
\newblock {\em Adv. Comput. Math.}, 42:155--174, 2016.

\bibitem{WANG202290}
Z.~Wang, R.~Wang, and J.~Liu.
\newblock Robust weak {G}alerkin finite element solvers for {S}tokes flow based
  on a lifting operator.
\newblock {\em Comput. Math. Appl.}, 125:90--100, 2022.

\bibitem{Watkins-2010}
D.~S. Watkins.
\newblock {\em Fundamentals of matrix computations}.
\newblock Pure and Applied Mathematics (Hoboken). John Wiley \& Sons, Inc.,
  Hoboken, NJ, third edition, 2010.

\end{thebibliography}

\end{document}